\documentclass{article}

\usepackage{amsmath}
\usepackage{amsthm}
\usepackage{graphicx} % Required for inserting images
\usepackage{amsfonts}
\usepackage{latexsym}
\usepackage{mathtools}
\usepackage{hyperref}
\usepackage{comment}
\usepackage{mathrsfs}
\usepackage{stmaryrd}
\usepackage{thm-restate}

\newtheorem{thm}{Theorem}[section]
\newtheorem{lma}{Lemma}[section]
\newtheorem{exa}{Example}
\newtheorem{prop}{Proposition}[section]
\newtheorem{defi}{Definition}
\newtheorem{rmk}{Remark}

\newcommand{\nat}{\mathbb{N}}
\newcommand{\natp}{\mathbb{N}\setminus\{0\}}
\newcommand{\integer}{\mathbb{Z}}
\newcommand{\deq}{\coloneq}
\newcommand{\pset}{\mathcal{P}}
\newcommand{\qset}{\mathcal{Q}}
\newcommand{\real}{\mathbb{R}}
\newcommand{\lfl}{\lfloor}
\newcommand{\rfl}{\rfloor}
\newcommand{\cali}{\mathcal{I}}
\newcommand{\undPT}{\underline{\pset_T}}
\newcommand{\overPT}{\overline{\pset_T}}
\newcommand{\cale}{\mathcal{E}}
\newcommand{\calf}{\mathcal{F}}
\newcommand{\emptyline}{\vskip\baselineskip}

\newcommand{\prmx}{\mathbb{P}_x}

\newcommand{\elx}{(\log x)}

\newcommand{\rset}{\mathcal{R}}
\newcommand{\sset}{\mathcal{S}}
\newcommand{\tset}{\mathcal{T}}

\newcommand{\ppx}{\mathbb{P}_x}

\newcommand{\eps}{\epsilon}

\newcommand{\sumcirc}{\sum^\circ}

\title{On weird numbers with high abundancy index}

\date{\today}

\begin{document}

\author{Kei Hisamoto}

\maketitle
\begin{abstract}
    Let $n$ be a natural number.
    It is proved that if the sum of divisors of $n$ has sufficiently large relative to $n$, then $n$ is some distinct sum of proper divisors of $n$.

\end{abstract}
\section{Introduction}
 A natural number $n$ is called a $\mathit{semiperfect}\ \mathit{number}$ if there exists a set of proper divisors that sums to $n$.
The $\mathit{abundancy}$ $\mathit{index}$ of $n$ is defined to be $\sigma(n)/n$
 where $\sigma(n)$ denotes sum of all divisors of $n$.

It is conjectured by Benkoski and Erd\H{o}s in \cite[p.617]{BeEr74},\cite[p.47]{erdos1996some},\cite{Er74b} that there is an absolute constant $C$ such that if $n$ has abundancy index larger than $C$, then $n$ is a semiperfect number.
If $n$ has abundancy index greater than 2 and is not semiperfect, then $n$ is called a $\mathit{weird}\ \mathit{number}$.
See also \cite{Erdos825} for further information.

Our main theorem in this paper is :

\begin{thm}\label{mainthm633}
If the abundancy index of a natural number $n$ is larger than $633$, then $n$ is semiperfect.
\end{thm}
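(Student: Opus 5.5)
The plan is to construct, for any $n$ with $\sigma(n)/n>633$, an explicit subset of proper divisors summing to $n$. We do this by splitting $n$ into a smooth part, which supplies a flexible set of small divisors, and a remaining part, handled greedily. Write $n=\prod p_i^{a_i}$. Since $\sigma(n)/n\le\prod_{p\mid n}p/(p-1)$, a large abundancy index forces many small prime factors in a quantitative sense: Mertens-type bounds say $\sum_{p\mid n}1/p$ exceeds roughly $\log 633$. We fix a threshold $y$ and let $m=\prod_{p\le y}p^{a_p}$ be the $y$-smooth part and $k=n/m$. Using the constraint $\prod_{p\mid n}p/(p-1)>633$, we choose $y$ so that either $\sigma(m)/m$ is already large, or $k$ has many prime factors whose reciprocals have a large sum.

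The key tool is a \emph{practical-number} style lemma: if $d_1<d_2<\dots$ are the divisors of some $m'\mid n$ and each $d_{j+1}\le 1+\sum_{i\le j}d_i$, then every integer up to $\sigma(m')$ is a sum of distinct divisors of $m'$. The construction has three steps.
\begin{enumerate}
\item Find a divisor $m'\mid n$ that is practical in this sense and has $\sigma(m')$ reasonably large. For instance, take $m'=2^{a}\cdot\prod p^{b_p}$ built up prime by prime while the Stewart--Sierpi\'nski condition $p\le\sigma(\text{current})+1$ holds.
\item Represent $n$ as $\sum_j e_j\,t_j$, where the $t_j$ are distinct divisors of $n/m'$ and each coefficient $e_j$ is a sum of distinct divisors of $m'$. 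Then all the products $d\cdot t_j$ are distinct divisors of $n$. To arrange this, order the divisors $t$ of $n/m'$ decreasingly and greedily take $e_t=\sigma(m')$ (i.e.\ the full multiple $t\sigma(m')$) while the remainder stays large. When the remainder $r$ drops below $t\sigma(m')$, write $r=qt+s$ with $q\le\sigma(m')$ represented by divisors of $m'$, and continue with the smaller $t$.
\item Finish the remainder using $t=1$, which requires $r\le\sigma(m')$.
\end{enumerate}
This reduces the proof to showing that $\sum_{t\mid n/m'}t\,\sigma(m')=\sigma(n)$ exceeds $n$ by enough to absorb the losses. The remainders at each step also need to be divisible appropriately, or small enough to pass to the next $t$. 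We must also exclude $n$ itself being used, which is automatic if the total used is $n$ and no single term equals $n$.

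The main obstacle is step 2 when $m'$ is small, i.e.\ when $n$ has few small primes, e.g.\ $n$ is odd or $2$ divides $n$ only to a low power. Then the practical part is tiny, and the abundancy must come from many medium primes. Here I would use the large constant $633$ quantitatively. Since $\prod_{p\mid n}p/(p-1)>633$ while $\prod_{p\le y}p/(p-1)\approx e^\gamma\log y$, the set of primes of $n$ must extend far past $e^{633e^{-\gamma}}$, giving a huge supply of prime factors. I would then replace the practical-divisor argument by an additive-combinatorics one: sums of distinct divisors $n/p$ over primes $p\mid n$ with $p$ in a dyadic range are dense. By a pigeonhole or Erd\H{o}s--Graham type lemma on subset sums of $\{1/p\}$, these fractions approximate $1$ well enough that a small correction, supplied by the practical part $2^a$, finishes. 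The numerical bookkeeping that produces exactly $633$ is the technical heart. I would first try explicit Rosser--Schoenfeld bounds on $\prod p/(p-1)$ and case-split on $v_2(n)$ and $v_3(n)$.
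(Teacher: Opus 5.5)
Your proposal has a genuine gap. The part you actually spell out (Steps 1--3) covers only the nearly practical case, and the case you call ``the main obstacle'' is where the whole theorem lives; there you list hoped-for tools rather than an argument.

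\textbf{Step 2 is not controlled by $\sigma(n)-n$.} After you process a divisor $t$ of $n/m'$, the remainder can be anything below $t$. It can be absorbed by the smaller $t'$ only if the products $dt'$ behave like a slowly growing sequence. In particular, to guarantee that Step 3 succeeds you need the smallest prime of $n/m'$ to be at most $\sigma(m')+1$, which is essentially the Stewart--Sierpi\'nski condition for $n$ itself to be practical. Large abundancy does not enforce this, because the obstruction is gaps between divisors, not total mass. Take $n=2Q$ with $Q$ a squarefree product of primes above $10^{50}$ and $\sum_{p\mid Q}1/p$ large. Then $m'=2$ and $\sigma(m')=3$. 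After taking $t=Q$ once, you must write $Q=\sum e_t t$ with $t\mid Q$, $t<Q$, $e_t\le 3$. That is essentially the original problem for the odd part, and the greedy gives no guarantee.

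Separately, the products $dt$ are distinct and $\sum_t t\,\sigma(m')=\sigma(n)$ only when $\gcd(m',n/m')=1$. Your choice of exponents $b_p$ does not ensure this.

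\textbf{The hard case has no exactness mechanism.} Approximating $1$ by subset sums of $1/p$ and ``correcting with $2^a$'' does not give an exact representation. Nothing forces the residual into the tiny range reachable by divisors of $2^a$. For odd $n$, the essential case by Weisenberg's observation in the introduction, there is no $2^a$ at all. What is missing is a way to manufacture a long interval of representable integers from divisors that are all products of large primes.

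\textbf{How the paper does it.} The paper supplies exactly this ingredient, and its route runs opposite to yours.
\begin{enumerate}
\item By Lemma~\ref{spdivisor} it discards every prime below $L=8.371\times10^{45}$. This costs $\sum_{p<L}1/p<4.9226$, while $\sigma(n)/n>633$ gives $\sum_{p\mid n}1/p>5.9527$.
\item It also discards the ``thin'' dyadic blocks $J_k$, at cost $<0.0276$. What remains is a squarefree $N$ of large primes with $\sum 1/p>1+9/(2L)$.
\item Inside one thick block $\subset[x,2x]$, pigeonhole gives many consecutive primes with a common gap $M<2(\log x)^2$. Differences of paired primes give many copies of $M$. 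Sums within one residue class mod $M$ give numbers $\equiv 1,2,\dots,2^m \pmod M$. Differences of same-class primes in a short interval give many small multiples of $M$.
\item Together these yield $E+I(F)$ with $F>4x^2$ (Theorems~\ref{first step} and~\ref{primestointerval}).
\item A slowly growing sequence is then pushed through the remaining divisors layer by layer. The conormality from Proposition~\ref{inverse} makes the final stretch $N'/p_l,\dots,N'/q_1$ valid, with total exceeding $N'$ (Theorem~\ref{logicallymainthm}).
\end{enumerate}
The constant $633$ comes from this bookkeeping with $L$, which is forced by the inequalities of Lemma~\ref{inequalities}. It does not come from a case split on $v_2(n)$ and $v_3(n)$.
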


\begin{rmk}
    It is expected that $C=3$ is suffice.
\end{rmk}

\begin{rmk}
Desmond Weisenberg observed that a $\mathit{weird}\ \mathit{number}$ with  abundancy index greater than $4$
must have an \textbf{odd weird number} as a divisor.
On the other hand, Erd\H{o}s asked 
``Are there any $\mathrm{odd}$ weird numbers?''. see $\cite{Erdos470}$,$\cite{Erdos825}$.
\end{rmk}

The outline of the proof of Theorem \ref{mainthm633} is as follows:
The first step in our method is to show that if $n$ has a large   abundancy index, then the set $\pset$ of primes dividing $n$ is large in the sense that the sum of reciprocals of its elements is larger than a certain constant.
The second step is to find ``good'' subset
$\mathcal{P}'\subset \mathcal{P}$ by omitting a ``thin'' part and  ``too small'' primes in $\pset$. $\pset'$ satisfies the conditions in Theorem \ref{logicallymainthm}.

Theorem \ref{logicallymainthm} implies that if a set of prime numbers $\qset$ only has large elements and do not have ``thin'' part, the product of all elements of $\mathcal{Q}$  is  semiperfect.

The proof of Theorem \ref{logicallymainthm} is as follows:
Assume that a set of primes $\overline{\pset}$ satisfies the conditions in Theorem\ref{logicallymainthm}, find subset $\pset_{g}\subset \overline{\pset}$ such that distribution of divisors of $N''\deq\Pi_{p\in\pset_g}p$ satisfies certain conditions. ($\pset_g$ is made by ommitting ``too large'' primes.) These conditions are strong enough to imply that $N''$ is semiperfect.

In the argument above, we used the lemma below.

\begin{lma}\label{spdivisor}
Assume $n\in\nat$ and let $n'$ be a divisor of $n$.
Then if $n'$ is semiperfect, $n$ is semiperfect.
\end{lma}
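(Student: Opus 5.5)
The plan is to scale a representation of $n'$ up to one of $n$. Since $n'$ is semiperfect, there is a finite set $D$ of proper divisors of $n'$, that is, divisors $d$ of $n'$ with $d<n'$, such that
\begin{equation*}
\sum_{d\in D} d = n'.
\end{equation*}
Write $n = k n'$ with $k\in\nat$, which is possible because $n'$ divides $n$. Multiplying the relation by $k$ gives
\begin{equation*}
\sum_{d\in D} kd = kn' = n.
\end{equation*}
The candidate set of proper divisors of $n$ is therefore $kD \deq \{kd : d\in D\}$.

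Three things must be checked for this set.

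First, every element is a divisor of $n$. If $d\mid n'$, then $kd \mid kn' = n$.

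Second, every element is proper. From $d<n'$ and $k\geq 1$ we get $kd<kn'=n$.

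Third, the elements of $kD$ are distinct, so that $n$ is a sum of \emph{distinct} proper divisors, as the definition of semiperfect requires. This holds because the map $d\mapsto kd$ is injective on positive integers, so the elements of $kD$ are pairwise different and the sum over $kD$ equals $\sum_{d\in D} kd = n$.

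Together these show that $n$ is semiperfect. There is no real obstacle here. The only point needing care is the convention for ``proper divisor'', namely that it means $d<n'$ and not the stricter condition $1<d<n'$. Under the stricter convention the divisor $d=1$ would scale to $k$, which is still a proper divisor of $n$ and differs from $1$ when $k>1$, so the argument goes through unchanged under either convention.
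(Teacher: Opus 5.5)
Your proof is correct and is the same as the paper's: multiply each summand of a representation of $n'$ by $k=n/n'$, and check that the scaled terms are distinct proper divisors of $n$. One small point is that your closing remark is slightly muddled, since under the stricter convention $1\notin D$ in the first place; the conclusion still holds there because $kd\ge d>1$.
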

\begin{proof}
Assume $n'$ is semiperfect. Let $n'= d_1+\cdots+d_m$ be a representation of $n'$ as a sum of distinct proper divisors of $n'$. 
Then $n=d_1\cdot n/n'+\cdots+d_m\cdot n/n'$ is a representation of $n$ as the sum of distinct  proper divisors of $n$.
\end{proof}

\section{Notation and some examples}

Let $A$,$B$ and $A_i\ (i \in I)$ 
be subsets of $\integer$,
where $I$ is finite index set.
Define
\begin{equation}
A+B \coloneq \{a + b\ |\ a\in A,b\in B \}\ ,\notag
\end{equation}
\begin{equation}
AB\deq\{ab\ |\ a\in A,b\in B \}\ .\notag
\end{equation}
When $A = \{a\}$ is a set with one element, we write
\begin{equation}
a+B \coloneq\{a\}+B = \{a + b\ |\ b\in B \}\ , \notag
\end{equation}
\begin{equation}
aB \coloneq\{a\}B = \{ab\ |\ b\in B \}\ . \notag
\end{equation}
The definition is similar when there are finitely many sets:
\begin{equation}
\sum_{i = 1}^{n}A_i \coloneq \{a_1+a_2+\dots + a_n\ |\ a_i\in A_i \}\ . \notag
\end{equation}
For $0\neq a\in \nat$, define $I(a) \coloneq \{0,1,\dots,a\} $
and $[a]\coloneq \{0,a \}$. Note that $I(1)=[1]$.
We see that a natural number $n \in \nat$ is semiperfect if and only if
\begin{gather}
n\in\sum_{d|n,d\neq n}[d] =\sum_{i=1}^t\{0\cdot d_i ,1\cdot d_i \}  \notag
\\ =\big\{\sum_{i=1}^t \epsilon_i d_i | \epsilon_i\in\{0,1\}\big\}
. \notag
\end{gather}
Let $a,b,c\in\nat$. By $a\prec b$, we mean $b\le 2a$.
By $a\prec b\prec c$, we mean $a\prec b$ and $b\prec c$.
For $X\subset\nat$, define
\begin{equation}
\Pi(X) \deq \prod_{x\in X}x\ , \notag
\end{equation}
\begin{equation}
\displaystyle \sum (X) \deq\sum_{x\in X}x\ . \notag
\end{equation}
For $k\in\nat$, define
\begin{equation}
\binom{X}{k} \deq\{ S\ \big|\ S\subset X, |S| = k \}\ , \notag
\end{equation}
\begin{equation}
\binom{X}{k}_\Pi \deq \{\Pi(S)\ \big|\ S\in\binom{X}{k}\}\ . \notag
\end{equation}

\begin{exa}\label{plusminus}
Let $a,b\in\nat$, and assume $a < b$ then we have
\begin{equation}
[a]+[b] = \{0,a\}+\{0,b\}=\{0,a,b,a+b\}\supset
\{a,b\}=\{a\}+[b-a] ,\notag
\end{equation}
\begin{equation}
[a] + [b]\supset[a+b] .\notag
\end{equation}
\end{exa}
\begin{exa}\label{extinterval}
Let $a,b\in \natp$, $b\leq a +1$.
\begin{equation}
I(a)+[b] = I(a) + \{0,b\} = I(a + b). \notag
\end{equation}
\end{exa}

\section{Preliminaries}
We call a sequence  $a_1,a_2,\dots,a_n$ of natural numbers
$\mathit{slowly}\ \mathit{growing}\ \mathit{sequence}$ (SGS) when 
\begin{equation}
a_k\leq a_1 + a_2 +\dots+a_{k-1}+1 \notag
\end{equation}
for all $2\leq k\leq n$.
\begin{lma}\label{intervalfromsgs}
If $a_1,a_2,\dots,a_n$ is a slowly glowing sequence, 
then 
\begin{equation}
I(a_1) + \sum_{i=2}^n [a_i] =  I\Bigl(\sum_{i=1}^n a_i\Bigr). \notag
\end{equation}

\end{lma}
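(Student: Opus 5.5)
We argue by induction on $n$, adding the sets $[a_k]$ one at a time and applying Example \ref{extinterval} at each step.

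For $n=1$ the sum over $i\ge 2$ is empty, so both sides equal $I(a_1)$.

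For the inductive step, let $a_1,\dots,a_n$ be a slowly growing sequence with $n\ge 2$. Its initial segment $a_1,\dots,a_{n-1}$ is again slowly growing, since the defining inequality only involves $k\le n-1$. By the inductive hypothesis,
\begin{equation}
I(a_1)+\sum_{i=2}^{n-1}[a_i] = I\Bigl(\sum_{i=1}^{n-1}a_i\Bigr). \notag
\end{equation}
Adding $[a_n]$ to both sides, it remains to show that
\begin{equation}
I(A)+[a_n]=I(A+a_n), \qquad A\deq\sum_{i=1}^{n-1}a_i. \notag
\end{equation}
The slowly growing condition at $k=n$ is exactly $a_n\le A+1$. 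This is the hypothesis $b\le a+1$ of Example \ref{extinterval} with $a=A$ and $b=a_n$, and that example gives the required identity.

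The only care needed is that Example \ref{extinterval} is stated for $a,b\in\natp$, so zero entries must be handled.
\begin{itemize}
\item If $a_n=0$, then $[a_n]=\{0\}$ and the identity is trivial.
\item If $A=0$, then $I(0)=\{0\}$, and the condition forces $a_n\le 1$, so $I(0)+[a_n]=\{0,a_n\}=I(a_n)$.
\end{itemize}
Alternatively, one can verify $I(a)+\{0,b\}=I(a+b)$ directly for all $0\le b\le a+1$. The union $\{0,\dots,a\}\cup\{b,\dots,a+b\}$ has no gap, because $b\le a+1$.

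Either way the induction closes, and there is no real obstacle beyond this bookkeeping.
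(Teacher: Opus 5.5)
Your proof is correct and follows the paper's own argument, which is just ``apply Example~\ref{extinterval} repeatedly''; your induction spells this out, using that the SGS condition at $k=n$ is exactly the hypothesis $a_n\le \sum_{i<n}a_i+1$ of that example. Your handling of zero entries, where $I(0)$ and Example~\ref{extinterval} are not formally covered by the paper's definitions, is a harmless refinement that the paper omits.
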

\begin{proof}
This can be shown by applying Example 2 repeatedly.
\end{proof}
\begin{exa}
We will show that $770 = 2\cdot5\cdot7\cdot11$ is semiperfect.
\begin{gather}
\notag
\sum_{d|770,\ne770}[d] \\ \notag
=[1]+[2]+[5]+[7]+[11]+\dot{[10]}+\dot{[14]}+[22]+[35]+[55]+[77]\\ \notag
+[70]+[110]+[154]+[385]\\ \notag
\supset
10+I(1)+[2]+[4]+[5]+[7]+[11]+[22]+[35]+[55]\\ \notag
+[70]+[77]+[110]+[154]+[385]\\ \notag
= 10+I(938).
\end{gather}
 $1,2,4,5,7,\dots,154,385$ is SGS. 
 The final equality sign follows from Lemmma $\ref{intervalfromsgs}$.
In particular $770$ is semiperfect.

\end{exa}

\begin{lma}\label{sim}
Let $n\ge2$. If $a_1,a_2,\dots,a_n$ is a slowly glowing sequence and 
\linebreak$a_n\prec a_{n+1}$. Then  $a_1,a_2,\dots,a_n,a_{n+1}$ is also an SGS.

\end{lma}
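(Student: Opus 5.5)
The plan is a direct verification of the single new inequality required by the definition of an SGS. By hypothesis $a_1,\dots,a_n$ is already slowly growing, so the condition $a_k\le a_1+\cdots+a_{k-1}+1$ holds for $2\le k\le n$. The only thing left to check is
\begin{equation}
a_{n+1}\le a_1+a_2+\cdots+a_n+1. \notag
\end{equation}

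First, by the definition of $\prec$, the hypothesis $a_n\prec a_{n+1}$ means $a_{n+1}\le 2a_n = a_n + a_n$. So it suffices to show
\begin{equation}
a_n \le a_1+\cdots+a_{n-1}+1 . \notag
\end{equation}
This is exactly the SGS condition at index $k=n$, which is available because $n\ge 2$. Combining the two inequalities gives
\begin{equation}
a_{n+1}\le a_n + a_n \le a_n + a_1+\cdots+a_{n-1}+1 = a_1+\cdots+a_n+1, \notag
\end{equation}
which is the required inequality at index $k=n+1$.

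I do not expect a genuine obstacle. The only point needing care is the role of the assumption $n\ge 2$: it guarantees that the SGS condition at index $n$ is part of the hypothesis. For $n=1$ the argument would need $a_1\le 1$, which is not assumed, and the statement indeed fails for, say, $a_1=3$, $a_2=6$. I will remark on this to explain the hypothesis.
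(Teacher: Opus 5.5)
Your proposal is correct and follows essentially the same route as the paper: you use $a_{n+1}\le 2a_n$ together with the SGS condition at index $k=n$ to get $a_{n+1}\le a_1+\cdots+a_n+1$. Your remark that $n\ge 2$ is needed (e.g.\ $3,6$ fails for $n=1$) is a correct addition the paper does not make explicit.
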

\begin{proof}
By assumption,
$a_k\leq a_1 + a_2 +\dots+a_{k-1}+1$ for all $2\leq k\leq n$.
and
$a_{n+1}\le a_n + a_n\le a_1+\dots+a_{n-1}+1+a_n $. So 
$a_1,a_2,\dots,a_n,a_{n+1}$ is a slowly growing sequence.
\end{proof}

In the rest of this paper,
fix $L \deq 8.371\times10^{45} $.
 
\begin{lma}\label{inequalities}
Under assumptions $\real \ni x>L$ and $y \in \nat $,
following inequalities hold:
\begin{equation}\label{2592log18}
x>2592(\log x)^{21},
\end{equation}

\begin{equation}\label{binom2x}
\binom{y}{k}>2x\ if\ y,k\in\nat, \ y \ge \lceil x/(\log x)^2 \rceil\ ,\ 2\le k\leq y-2,
\end{equation}

\begin{equation}\label{powofx/l4}
\Bigl( 1+\frac{1}{2(\log x)^2}\Bigr) ^{x/(\log x )^2}
>2^{x/2(\log x)^4}
>2x,
\end{equation}
\begin{equation}\label{logxlogm}
\log x > \lfl \log_2(2(\log x)^2)\rfl + 10.
\end{equation}

\end{lma}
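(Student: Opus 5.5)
The plan is to prove each of the four inequalities in Lemma~\ref{inequalities} in the same way. First check it at $x=L$. Then show that the relevant difference, or ratio of logarithms, is increasing for $x\ge L$, which gives the inequality for all $x>L$. Throughout I write $\ell\deq\log x$ and use the numerical value $\log L\approx 45\log 10+\log 8.371\approx 103.62+2.125\approx 105.75$.

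For (\ref{2592log18}) I take logarithms: the claim is $\ell>\log 2592+21\log\ell$. The function $\ell-21\log\ell$ is increasing for $\ell>21$. At $\ell=105.75$ we have $21\log\ell\approx 21\cdot 4.661\approx 97.9$ and $\log 2592\approx 7.86$, so the right side is about $105.74$. This is essentially the point where $L$ was chosen, so the margin is tiny, and the check at $L$ needs careful decimal arithmetic (several significant digits of $\log 8.371$ and $\log\log L$). I expect this numerical verification to be the main obstacle. The monotonicity part is trivial. For (\ref{logxlogm}), the quantity $\log_2(2\ell^2)=1+2\log_2\ell$ is about $14.45$ at $x=L$, so its floor is $14$. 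Then $24<105.75$, and the gap $\ell-2\log_2\ell$ only widens as $x$ grows.

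For (\ref{powofx/l4}) I handle the two inequalities separately.
\begin{itemize}
\item \emph{First inequality.} Use $\log(1+t)\ge t\log 2$ for $0\le t\le 1$ (concavity of $\log$, since the line $t\log 2$ is the chord from $0$ to $1$). With $t=1/(2\ell^2)$ this gives
\[
\Bigl(1+\frac{1}{2\ell^2}\Bigr)^{x/\ell^2}\ge 2^{x/(2\ell^4)},
\]
and the inequality is strict because $0<t<1$.
\item \emph{Second inequality.} We need $x\log 2/(2\ell^4)>\log 2+\ell$, that is, $x>2\ell^4(1+\ell/\log 2)$. This follows from (\ref{2592log18}), since $2592\ell^{21}$ vastly exceeds $2\ell^4(1+2\ell)$ once $\ell>100$.
\end{itemize}

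For (\ref{binom2x}), set $m\deq\lceil x/\ell^2\rceil$, so that $y\ge m$. For $2\le k\le y-2$, unimodality and symmetry of binomial coefficients give
\[
\binom{y}{k}\ge\binom{y}{2}=\frac{y(y-1)}{2}\ge\frac{m(m-1)}{2}.
\]
It therefore suffices to show $m(m-1)>4x$, which holds if $(x/\ell^2)(x/\ell^2-1)>4x$. This reduces to $x>4\ell^4+\ell^2$, again immediate from (\ref{2592log18}).

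So the only genuinely delicate point is the numerical check of (\ref{2592log18}) at $x=L$, and I would carry it out with explicit rounded bounds in the safe direction.
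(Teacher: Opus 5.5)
Your proposal is correct and follows the paper's proof. The paper likewise checks the first inequality at $x=L$ and uses monotonicity of $\log x-21\log\log x$, reduces the binomial bound to $k=2$ by unimodality, and gets the remaining inequalities from the first; your concavity bound $\log(1+t)\ge t\log 2$ and your direct check of the last inequality only make explicit what the paper leaves implicit. One caution: $\log L\approx 105.7411$ (not $105.75$), and the true margin in the first inequality at $x=L$ is only about $5\times10^{-5}$, so your rounded figures are too coarse and you do need the several-digit verification you anticipate.
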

\begin{proof}
We will prove first inequality.
We define $f(x)$ by
\begin{equation}
    f(x)\deq \log\biggl(\frac{x}{2592(\log x)^{21}}\biggr). \notag
\end{equation}
We can see $f(L)>0$ and $f'(x)=((\log x) - 21)/(x\log x)>0 \ (x\ge L).$
it means that $f(x)>0\ (x>L)$ and the desired inequality.

Second inequality follows from the special case $k=2$
and the fact that binomial coefficient becomes bigger the more it approaches to $k=\lfl n/2\rfl$.
Third, Fourth inequality comes from first inequality.
\end{proof}

\emptyline

By $\mathbb{P}$ we mean the set of all primes,
and $\mathbb{P}_x\deq\mathbb{P}\cap[x,2x]$.

\begin{restatable}{thm}{fstep}\label{first step}
Let $L<x\in \real$. Assume $\qset\subset[x,2x]$ is a set of prime numbers such that 
\begin{equation}|\qset|>\frac{2x}{(\log x)^2}+10
\end{equation}
and define
\begin{equation}
\mathscr{D}\deq \qset\cup p\cdot (\qset\setminus\{p\})\notag
\end{equation}
where $p$ is $\min(\qset)$.  Then there exist $E,F\in\nat$ such that
$E<4x^3$ and \linebreak$F>4x^2$, such that
\begin{equation}\label{concthm2}
E+I(F)\subset\sum_{d\in \mathscr{D}}[d].
\end{equation}
\end{restatable}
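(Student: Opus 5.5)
The plan is to build the interval in two stages: first a long arithmetic progression with a small common difference, then a small set of extra sums that fills in every residue class modulo that difference.

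\emph{Stage one: the progression.} Write $\qset=\{q_1<q_2<\dots<q_m\}$ with $q_1=p$ and $m>2x/(\log x)^2+10$. Since all $q_i$ lie in $[x,2x]$, the consecutive gaps $q_{i+1}-q_i$ sum to at most $x$. Hence at least half of them are at most $2x/m<(\log x)^2$. These gaps are even, so they take at most $(\log x)^2/2$ distinct values. By pigeonhole, some even value $g\le(\log x)^2$ occurs as a gap at least $m/(\log x)^2\gtrsim x/(\log x)^4$ times. Among these pairs $(q_i,q_i+g)$ we keep a family of pairwise disjoint ones, for instance every other one in a chain, so we still have $K\gtrsim x/(2(\log x)^4)$ of them. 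By Example~\ref{plusminus}, each pair satisfies $[q_i]+[q_i+g]\supset q_i+[g]$. Summing over the family gives
\begin{equation}
E_1+g\,I(K)\subset\sum_{\text{pairs}}\bigl([q_i]+[q_i+g]\bigr),\qquad E_1=\sum q_i\le 2xK. \notag
\end{equation}
Here $gK$ should exceed $4x^2$ by a wide margin. If the count is not quite sufficient, I would instead use the products $p\cdot q$ in the same way: their gaps are $p(q_{i+1}-q_i)$, and the pairs $[q]+[pq]$ give further structure. The $2x$-type lower bounds in (\ref{powofx/l4}) and (\ref{binom2x}) of Lemma~\ref{inequalities} are the kind of estimates I expect to need here.

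\emph{Stage two: covering residues modulo $g$.} We need a set $R$ of subset sums of the unused elements of $\mathscr{D}$ that meets every residue class mod $g$ and has diameter $\le gK-4x^2$. Then $\bigl(E_1+g I(K)\bigr)+R$ contains a full interval $E+I(F)$ with $F>4x^2$. To build $R$, I would take about $\lfl\log_2(2(\log x)^2)\rfl+10$ unused elements, which is admissible by (\ref{logxlogm}), drawn from both $\qset$ and $p\qset$. The elements $q$ and $pq$ give residues $q$ and $pq$ mod $g$. Since $p$ is a prime larger than $g$, multiplication by $p$ is a unit mod $g$. Moreover, the remaining unused primes are equidistributed enough, by a counting argument using (\ref{binom2x}) on $k$-subsets, that their subset sums cover all residues mod $g$. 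A doubling argument applies: each new element at least doubles the set of residues covered until saturation, which is why about $\log_2 g$ steps suffice.

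\emph{Size bookkeeping.} The total offset satisfies $E\le E_1+\sum(\text{elements used})\le 2xm+4x^2\cdot O(\log\log x)<4x^3$, since $m\le x$ and $x>L$ is huge. Also $F\ge gK-\operatorname{diam}R>4x^2$, using (\ref{2592log18}) to absorb all polylogarithmic losses.

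\emph{Main obstacle.} I expect Stage two to be the hardest part. All elements of $\mathscr{D}$ are odd and share no small structure, so we must guarantee that subset sums of a few of them hit every class mod $g$. Parity already needs care: even for $g=2$ we need subsets of both parities, which holds because we may use $0$ or $1$ extra odd element. For general $g$, I would first try the doubling argument: the sumset $A+\{0,d\}$ of residue sets strictly grows unless $A$ is a union of cosets of $\langle d\rangle$. If it stalls, I would pick the next element outside the stabilising subgroup, using that the many unused primes cannot all lie in one proper coset.
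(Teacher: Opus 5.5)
\textbf{There is a genuine gap, and it is in Stage one, not Stage two.}

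Your progression $E_1+g\,I(K)$ is built from disjoint pairs of \emph{consecutive} elements of $\mathcal{Q}$ with common gap $g$. Its length therefore obeys $gK\le\sum_i(q_{i+1}-q_i)=q_m-q_1\le x$. This has three consequences:
\begin{itemize}
\item The assertion that $gK$ exceeds $4x^2$ ``by a wide margin'' is false, by a factor of at least $4x$.
\item The Stage two requirement $\operatorname{diam}R\le gK-4x^2$ asks for a negative diameter.
\item The progression-plus-residues mechanism ($F\ge gK-\operatorname{diam}R$) can deliver an interval of length at most $gK\le x$.
\end{itemize}
The fallback with the products does not repair this. Consecutive gaps among the $pq$ sum to at most $p(q_m-q_2)\le 2x^2$. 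Moreover, a progression of difference $pg>x$ could only be filled in by subset sums covering every residue mod $pg$ inside a window shorter than the progression. That essentially requires an interval of length at least $pg$ built from the primes themselves, and your plan never constructs one.

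The paper's proof supplies two ideas you are missing.

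\emph{First idea: a genuine interval from the primes alone.} Theorem~\ref{primestointerval} shows that any set of more than $2x/(\log x)^2+9$ primes in $[x,2x]$ yields an interval $\mathcal{E}+I(\mathcal{F})$ with $\mathcal{F}>4x$.
\begin{itemize}
\item Your Stage one is essentially Lemma~\ref{rtov}. The paper uses those copies of $M$ only as small change, since their total is about $x/(\log x)^2$.
\item The real length comes from \emph{nested} differences $d_R-d_1,\,d_{R-1}-d_2,\dots$ of primes lying in one residue class $w$ mod $M$ and in one subinterval of length $<x/8(\log x)^5$. Each difference is a small multiple of $M$. Yet their total is of order $x^2/(\log x)^{18}$, far more than $4x$, which consecutive differences could never achieve.
\item Residues are not handled by a doubling argument. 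Instead, the paper forms sums $t_k$ of $u_k$ primes from the unit class $w$ with $t_k\equiv 2^k\pmod M$. These are whittled down to exactly $2^k$ using the small differences and the copies of $M$ (Lemma~\ref{mutomu1}).
\item The result is the slowly growing sequence $1,2,\dots,2^{\mathscr{M}},M,\dots,M,E_V,\dots,E_S$.
\end{itemize}

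\emph{Second idea: scaling by $p$.} Theorem~\ref{first step} applies Theorem~\ref{primestointerval} to both $\mathcal{Q}$ and $\mathcal{Q}\setminus\{p\}$, giving intervals of lengths $\mathcal{F}_1$ and $\mathcal{F}_2$. It then uses $I(\mathcal{F}_1)+p\,I(\mathcal{F}_2)=I(\mathcal{F}_1+p\mathcal{F}_2)$, which is valid because $\mathcal{F}_1\ge p-1$. This scaling is what reaches $F>4x^2$. Without an interval of length at least $p$ coming from the primes alone, the elements $pq$ cannot be merged into an interval at all.
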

This theorem follows from theorem below:
\begin{restatable}{thm}{primeint}\label{primestointerval}
    Asssume $\pset\subset \prmx $ and 
    \begin{equation}
        |\pset| > \frac{2x}{(\log x)^2} + 9.\notag
    \end{equation}
    Then there exist $\mathcal{E},\mathcal{F}\in \nat$ such that $\mathcal{E}<2x^2$ , $\mathcal{F}>4x$
    such that 
    \begin{equation}
        \mathcal{E}+I(\mathcal{F})\subset \sum_{p\in \pset}[p].\notag
    \end{equation}
\end{restatable}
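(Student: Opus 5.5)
The plan is to build, inside $\sum_{p\in\pset}[p]$, a translate of a \emph{slowly growing sequence} and then apply Lemma~\ref{intervalfromsgs}. The basic device is Example~\ref{plusminus}, generalised to disjoint blocks. If $S,T\subset\pset$ are disjoint and $\Sigma(S)<\Sigma(T)$, then
\[
\sum_{p\in S\cup T}[p]\supset\Sigma(S)+[\Sigma(T)-\Sigma(S)].
\]
So each pair of disjoint blocks costs a shift of at most $\Sigma(S)$ in $\mathcal{E}$ and contributes one ``virtual'' term $[\Sigma(T)-\Sigma(S)]$. Whole primes not used in any block contribute their own $[p]$. We will use a bounded number of blocks of bounded size, so the total shift stays below $2x^2$.

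\textbf{Step 1 (small differences by pigeonhole).} Let $m=|\pset|>2x/(\log x)^2+9$. The sums of $k$-element subsets of $\pset$ lie in $[kx,2kx]$, an interval of length $kx$. By \eqref{binom2x} of Lemma~\ref{inequalities} there are more than $2x$ such subsets, so two of them have sums differing by at most about $k/2$. Removing common elements gives disjoint $S,T$ of equal size with $0<\Sigma(T)-\Sigma(S)$ small. Since all primes are odd and $|S|=|T|$, this difference is even. Repeating on the unused primes, I expect to extract a family of small even virtual terms $2g_1,2g_2,\dots$.

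\textbf{Step 2 (grow to an interval).} Next comes a doubling stage. Pairing primes whose differences lie in dyadic ranges gives terms $a_j\prec a_{j+1}$, and Lemma~\ref{sim} keeps the sequence slowly growing. This pushes the running sum past $2x$ using $O(\log x)$ pairs, which the bound on $m$ affords since \eqref{logxlogm} gives room of about $\log_2(2(\log x)^2)+10$ spare primes. Once the partial sum exceeds $2x$, every remaining unused prime $p\le 2x$ satisfies the SGS condition, so it can be appended whole. Each such prime adds about $x$ to the length, and appending just a few of them gives $\mathcal{F}>4x$.

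\textbf{Step 3 (parity).} All virtual terms from equal-size blocks are even, so the construction above only yields $\mathcal{E}+2I(G)$. To fill odd residues, use one triple $p<q<r$ and the unequal-size pair $\{r\}$ versus $\{p,q\}$. This gives $r+[c]$ with $c=p+q-r$ odd and $0<c<3x$. This $[c]$ is added only at the end, after $G$ exceeds about $4x$. Then $2I(G)+[c]$ contains every integer in an interval of length more than $4x$, so we can take the original $x$ large in the even stage.

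The main obstacle is the start of the SGS in Step~1. Lemma~\ref{intervalfromsgs} needs first term $1$ (here $g_1=1$, i.e. a difference exactly $2$) and then small terms that keep $a_k\le a_1+\dots+a_{k-1}+1$. The pigeonhole only bounds the differences; it does not pin them to $2,4,\dots$. What I would try first is to apply the pigeonhole at many values of $k$ and collect many small even differences in $[2,(\log x)^2]$. Among $\gg(\log x)^2$ such values, some value $2g$ repeats many times. Then $g,g,\dots$ build $gI(\cdot)$. The gcd issue would be resolved by a second pigeonhole producing a difference coprime to $g$, or by working modulo $2g$ in a final step analogous to Step~3. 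Keeping all these shifts below $2x^2$ is the quantitative part where the precise constants in Lemma~\ref{inequalities} (in particular \eqref{2592log18}) should be used.
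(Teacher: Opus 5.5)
\textbf{There is a genuine gap, and it is the one you flag yourself.} Nothing in the proposal produces a true interval, because nothing covers every residue class modulo the common difference of your small terms. Step~2's appending of whole primes also presupposes an interval, and it is incompatible with the form $\mathcal{E}+2I(G)$ you assume in Step~3: for odd $p$, $2g\,I(G)+[p]$ is neither an interval nor a progression. Three concrete problems:
\begin{enumerate}
\item \textbf{Step~1 can give difference $0$.} The pigeonhole only gives two $k$-subset sums at distance $\le k/2$, and that distance may be $0$. Already for $k=2$, the roughly $2x^2/(\log x)^4$ pair sums take at most $2x+1$ values, so coincidences are forced. After removing common elements you get $\Sigma(S)=\Sigma(T)$, i.e.\ no term at all. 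Consecutive gaps $r_{j+1}-r_j$ of the sorted primes are automatically positive. A pigeonhole on them gives one even value $M\le 2(\log x)^2$ occurring on $\gg x/(\log x)^4$ disjoint pairs.
\item \textbf{Equal-size blocks cannot remove a forced common factor.} Suppose every $p\in\pset$ is $\equiv 1\pmod 3$; this is a legitimate choice, since there are about $x/(2\log x)$ such primes in $[x,2x]$. Then every $\Sigma(T)-\Sigma(S)$ with $|T|=|S|$ is divisible by $6$. So your ``second pigeonhole'' cannot give a difference coprime to $g$.
\item \textbf{The modulus-$M$ analogue of Step~3 is not supplied.} For $M=2g>2$ you need a whole family whose subset sums meet every class mod $M$; one element $c$ only covers the classes $0$ and $c$. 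Such elements must come from unequal-size blocks, and their sizes are not small a priori (a lone prime is already $\ge x$). You give no mechanism making the progression longer than their total. Your mod-$2$ trick is fine, but it does not generalise by itself.
\end{enumerate}

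\textbf{The missing idea is the paper's residue-class reduction.} Split $\pset=\pset_1\sqcup\pset_2$.
\begin{itemize}
\item From $\pset_1$, take $\lceil x/8(\log x)^4\rceil$ copies of a single gap $M$ (Lemma~\ref{rtov}).
\item In $\pset_2$, pass to one class $w$ mod $M$ with $\gcd(w,M)=1$. Form sums $t_k$ of $u_k\le M-1$ primes of that class with $t_k\equiv 2^k\pmod M$; varying the number of summands is what produces the needed unit residues. Also take many differences $e_i$ of primes of that class lying in one short interval. These satisfy $M\mid e_i$ and $e_i<x/8(\log x)^5$, and $\sum e_i$ far exceeds $\sum_k t_k$ (Lemma~\ref{stote}).
\item The $t_k$ are large, but that does not matter. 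Subtracting runs of consecutive $e_i$ shrinks each $t_k$ to some $\rho_k\le\lceil x/8(\log x)^5\rceil$ with $\rho_k\equiv 2^k\pmod M$. Subtracting copies of $M$ then leaves exactly $2^k$ (Lemma~\ref{mutomu1}).
\end{itemize}
The resulting $1,2,\dots,2^m,M,\dots,M,e_V,\dots,e_S$ is a slowly growing sequence starting at $1$ with sum $>4x$. That is exactly the object your Steps~1--3 need but do not construct.
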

\emptyline
The concept of $\mathit{multiset}$ is used in the proof of this theorem.
Multiset is set with information of multiplicity.
In this paper, multiset is a function
$\nat$ to $\nat$
\begin{equation}
    \mathcal{S}:a\to \mathrm{mult}_\mathcal{S}(a) \notag
\end{equation}
In this situation
we say $\mathcal{S}$ has $a$ with multiplicity $\mathrm{mult}_\mathcal{S}(a)$.
For ordinally set of natural number $X\subset\nat$,
By $\{X\}^1$ we mean multiset which include elements of $X$ with multiplicity $1$,
$\mathrm{i.e}$
\begin{equation}
    \{X\}^1(n)\deq 
    \begin{cases}
        1 & \text{$n\in X$} \\
        0 & \text{otherwise}. 
    \end{cases}\notag
\end{equation}
We call multiset $\mathcal{S}$ is finite if there is only finite natural number $a\in\nat$ such that $\mathrm{mult}_\mathcal{S}(a)\neq 0$.
In this paper. We only deal with finite multisets.

For convinience, we write multiset in the form of $\{\!\{e_1*\mathrm{mult}_\sset(e_1),\dots,e_z*\mathrm{mult}_\sset(e_z)\}\!\}$.
For example, multiset $\{\!\{2*3,5*2\}\!\}$ means multiset which include 2 with multiplicity 3 and 5 with multiplicity 2, and we drop out $*1$ for convinience. For example, $\{\!\{2,3,5\}\!\}$ means multiset which includes $2,3,5$ with multiplicity $1$.

For two multiset $\mathcal{S}\deq \{\!\{a_1*e_1,\dots,a_l*e_l\}\!\}
,\mathcal{T}$
we define 
$\mathcal{S}+\mathcal{T}$ by

\begin{equation}
    \mathrm{mult}_{\mathcal{S}+\mathcal{T}}(a)
    \deq \mathrm{mult}_\mathcal{S}(a)+
    \mathrm{mult}_\mathcal{T}(a)
    \ (a\in\nat). \notag
\end{equation}
By $FM$ we mean the set of finite multiset on $\nat$.
We can define preorder over $FM$.
We say  $\mathcal{S}\vdash\mathcal{T}$ when
there exist $e_{\mathcal{S},\mathcal{T}}\in\nat$
such that $\sum^\circ(\mathcal{S})\supset  e_{\mathcal{S},\mathcal{T}}
+\sum^\circ(\mathcal{T})$.
this $\vdash$ satisfies transitive property.
Example\ref{plusminus} means that
\begin{gather}
    \{\!\{a,b\}\!\}\vdash\{\!\{a+b\}\!\},\label{abtoab} \\
    \{\!\{a,b\}\!\}\vdash\{\!\{b-a\}\!\}\ (b>a), \label{abtoba} \\
    \{\!\{a_1,\dots,a_L,b\}\!\}\vdash\{\!\{b-\sum_{i=1}^L a_i\}\!\}\ (b> \sum_{i=1}^L a_i), \label{bminussuma} \\
    \{\!\{a_1,\dots,a_{L_1},a_{L_1+1},\dots,a_{L_2},\dots\dots,a_{L_K},b_1,\dots ,b_k\}\!\} \notag \\
    \vdash \{\!\{b_1-\sum_{i=1}^{L_1}a_i\ ,b_2-\sum
_{i=L_1+1}^{L_2}a_i\ ,\dots,b_K-\sum_{i=L_{K-1}+1}^{L_K} a_i\}\!\}\, \label{bbminussuma}  \\
(b_j>\sum_{i=L_{j-1}+1}^{L_j}a_i\ ,\ L_0\deq0) \notag \\
\{\!\{ a*m,b \}\!\}\vdash\{\!\{b-ma\}\!\}\ (b>ma). \label{bmma}
\end{gather}
\begin{lma}\label{sttost}

    Let $\sset,\sset^{'},\tset$ be multisets such that $\sset\vdash\sset^{'}$.
    then
    \begin{equation}
        \sset+\tset\ \vdash\ \sset^{'}+\tset.
    \end{equation}
\end{lma}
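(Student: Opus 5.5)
The plan is to reduce the statement to two elementary facts about sumsets: additivity of $\sum^\circ$ over multiset addition, and monotonicity of $+$ with respect to inclusion. Throughout I read $\sum^\circ(\sset)$ as the set of all sub-multiset sums of $\sset$, that is,
\begin{equation}
\textstyle\sum^\circ(\sset)=\sum_{a}\ \sum_{j=1}^{\mathrm{mult}_\sset(a)}[a], \notag
\end{equation}
with the convention that the empty multiset gives $\{0\}$. This is the reading under which (\ref{abtoab})--(\ref{bmma}) are restatements of Example \ref{plusminus}.

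First I will check the additivity identity
\begin{equation}
\textstyle\sum^\circ(\sset+\tset)=\sum^\circ(\sset)+\sum^\circ(\tset). \notag
\end{equation}
By definition, $\mathrm{mult}_{\sset+\tset}(a)=\mathrm{mult}_\sset(a)+\mathrm{mult}_\tset(a)$. So the family of sets $[a]$ summed on the left is exactly the concatenation of the families summed for $\sset$ and for $\tset$. Since Minkowski addition of finite subsets of $\integer$ is associative and commutative, the identity follows by regrouping the finitely many summands.

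Next I will record the monotonicity fact: if $A\supset B$, then $A+C\supset B+C$ for any $C$, because every $b+c$ with $b\in B$ also has $b\in A$. I will also use the translation rule $(e+B)+C=e+(B+C)$.

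With these in hand the proof is a single chain. Let $e=e_{\sset,\sset'}$ be such that $\sum^\circ(\sset)\supset e+\sum^\circ(\sset')$. Then
\begin{equation}
\textstyle\sum^\circ(\sset+\tset)=\sum^\circ(\sset)+\sum^\circ(\tset)\supset e+\sum^\circ(\sset')+\sum^\circ(\tset)=e+\sum^\circ(\sset'+\tset). \notag
\end{equation}
Hence $\sset+\tset\vdash\sset'+\tset$, with the same witness $e_{\sset+\tset,\sset'+\tset}=e$.

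There is no real obstacle here. The only point needing care is to make the additivity of $\sum^\circ$ explicit, since the paper uses $\sum^\circ$ without formally defining it, and to handle elements of multiplicity zero and the empty multiset consistently via the $\{0\}$ convention.
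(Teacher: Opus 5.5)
Your proof is correct and takes the same route as the paper, whose one-line argument is the monotonicity $A\subset A'\Rightarrow A+B\subset A'+B$ applied to $\sumcirc(\sset+\tset)=\sumcirc(\sset)+\sumcirc(\tset)$. The only difference is that you state the additivity of $\sumcirc$ under multiset addition and the translation rule explicitly, where the paper leaves them implicit, and you keep the same witness $e$.
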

\begin{proof}
    This lemma forrows from the fact that if $A\subset A',B$ are set of integer then $A+B\subset  A'+B$.
\end{proof}
By $\mathbb{P}$ we mean the set of all primes,
and $\mathbb{P}_x\deq\mathbb{P}\cap[x,2x]$.

\begin{lma}\label{rtov}
    Let $\mathcal{R}\subset\ppx$ such that $|\rset|>x/\elx^2+1$ then there exist $2\le\nu\le2\elx^2$ such that
    \begin{equation}
    \{\rset\}^1\vdash\{\!\{\nu*\lceil\frac{x}{8\elx^4}\rceil\}\!\}
    \end{equation}
\end{lma}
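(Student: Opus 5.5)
The plan is to find at least $M\deq\lceil x/(8\elx^4)\rceil$ \emph{disjoint} pairs $\{a_i<b_i\}\subset\rset$, all with the same difference $b_i-a_i=\nu$ and $2\le\nu\le2\elx^2$. Then \eqref{abtoba}, applied pair by pair and combined through Lemma~\ref{sttost}, turns these pairs into $M$ copies of the number $\nu$, which is exactly $\{\!\{\nu*M\}\!\}$. The left-over elements of $\rset$ are simply discarded: for any multiset $\tset$ we have $\sset+\tset\vdash\sset$ with shift $0$, because $0\in\sum^\circ(\tset)$.

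\textbf{Step 1 (blocks).} Cut $[x,2x]$ into consecutive half-open blocks of length $2\elx^2$. There are at most $x/(2\elx^2)+1$ such blocks. Let $c_b$ be the number of elements of $\rset$ in block $b$, so that $\sum_b c_b=|\rset|>x/\elx^2+1$. Inside each block, sort its elements of $\rset$ and pair them consecutively: the 1st with the 2nd, the 3rd with the 4th, and so on. This gives $\lfloor c_b/2\rfloor\ge (c_b-1)/2$ pairs in block $b$, and all these pairs are disjoint. Summing over blocks, the total number of pairs is at least
\[
\tfrac12\Bigl(|\rset|-\tfrac{x}{2\elx^2}-1\Bigr)>\frac{x}{4\elx^2}.
\]
Each pair lies in one block, so its difference is at most $2\elx^2$. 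Since $x>L$, every element of $\rset$ is an odd prime, so each difference is even and at least $2$.

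\textbf{Step 2 (pigeonhole).} The possible differences are the even numbers in $[2,2\elx^2]$, and there are at most $\elx^2$ of them. Hence some even $\nu$ with $2\le\nu\le2\elx^2$ is the difference of at least $x/(4\elx^4)$ of the pairs. Because $x>L$, Lemma~\ref{inequalities}\eqref{2592log18} gives $x/(8\elx^4)\ge1$, so
\[
\frac{x}{4\elx^4}\ \ge\ \frac{x}{8\elx^4}+1\ \ge\ M.
\]
Choose $M$ of these pairs, $\{a_1,b_1\},\dots,\{a_M,b_M\}$.

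\textbf{Step 3 (assembling $\vdash$).} Write $\{\rset\}^1=\sum_{i=1}^M\{\!\{a_i,b_i\}\!\}+\tset$, where $\tset$ collects the remaining elements. First discard $\tset$. Then apply \eqref{abtoba} to one pair at a time, each time using Lemma~\ref{sttost} with the other pairs (or the copies of $\nu$ already produced) as the fixed summand. This yields
\[
\{\rset\}^1\vdash\sum_{i=1}^M\{\!\{a_i,b_i\}\!\}\vdash\{\!\{\nu*M\}\!\},
\]
and the conclusion follows by transitivity of $\vdash$. Equivalently, this is \eqref{bbminussuma} with each $b$ paired to a single $a$.

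The main obstacle is the counting in Step 1: the number of usable pairs must beat $\elx^2\cdot M$. A naive argument that pairs globally consecutive elements and bounds the large gaps by their total length gives only about $x/\elx^4$ good pairs, and hence about $x/\elx^6$ copies after pigeonholing, which is too weak. Pairing inside short blocks avoids this loss, and the hypothesis $|\rset|>x/\elx^2+1$ exceeds the number of blocks by enough to leave $x/(4\elx^2)$ pairs. The remaining step to check carefully is the boundary bookkeeping, namely the block count and the ceiling in $M$; the margin $x/(8\elx^4)\ge1$ covers it.
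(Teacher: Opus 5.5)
Your proof is correct. Its skeleton matches the paper's: find enough pairwise disjoint pairs in $\rset$ sharing one difference $\nu\le 2\elx^2$, turn each pair into $\nu$ by \eqref{abtoba}, and discard the rest. The two proofs differ only in how the disjoint short pairs are produced.

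The paper writes $\rset=\{r_1<\dots<r_m\}$ and looks at all $m-1$ consecutive gaps $g_k=r_{k+1}-r_k$.
\begin{itemize}
\item These gaps sum to less than $x$, so fewer than $x/(2\elx^2)$ of them are $\ge 2\elx^2$. Hence more than $x/(2\elx^2)$ gaps are short.
\item Pigeonholing over the fewer than $2\elx^2$ possible values gives a value $M$ taken by more than $x/(4\elx^4)$ gaps.
\item Keeping only the indices $k$ of one parity makes the pairs $\{r_k,r_{k+1}\}$ disjoint, at the cost of a factor $2$.
\item Integrality of the count then gives at least $\lceil x/(8\elx^4)\rceil$ pairs directly.
\end{itemize}
Your partition into blocks of length $2\elx^2$ builds disjointness in from the start. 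You also use the evenness of the differences to halve the number of pigeonhole classes. So you reach more than $x/(4\elx^4)$ disjoint pairs, and you spend the spare factor $2$ (via $x/(8\elx^4)\ge 1$) on absorbing the ceiling. The two routes are about equally short.

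Your closing remark about the ``main obstacle'' is mistaken, though. A global consecutive-gap argument does not lose a factor $\elx^2$; the paper uses exactly such an argument. The point is to count short gaps among all $m-1$ overlapping gaps first, and only afterwards restrict to one index parity. This yields the same order $x/\elx^2$ of short disjoint pairs as your blocks. So the block partition is a convenient alternative, not a necessity.
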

\begin{proof}
Let $\rset = \{r_1,r_2,\dots,r_m\}\ (r_1<r_{2}<\cdots<r_m)$
%\ m\ge \lceil x/(\log x)^2\rceil+1)% 
And 
\begin{equation*}
g_k \deq r_{k+1} -r_k\ (1\le k\le m-1).
\end{equation*}
We can see the following;
\begin{equation}
\sum_{k = 1}^{m-1}g_k = r_m-r_1<x. \notag
\end{equation}
so, the number of $k$ that satisfy $g_k\ge 2(\log x)^2$
is smaller than $x/2(\log x)^2$. 
Therefore, the number of $1\le k\le m-1$ 
that satisfy $g_k<2(\log x)^2$ is larger than $x/2(\log x)^2$.

By the pigeonhole principle, there exists $0<M<2(\log x)^2$
such that
\linebreak$G_M\deq\{k\ |\ g_k = M,\ 1\le k\le m-1\}$ has a cardinality larger than $x/4(\log x)^4$.

At least one of the two sets $G_M^\mathrm{o}\deq\{k\in G_M|\ \mathrm{odd}\}$, and $\ G_M^\mathrm{e}\deq\{k\in G_M|\ \mathrm{even}\}$ has a cardinality greater than $x/8(\log x)^4$. We may assume $|G_M^\mathrm{o}|> x/8(\log x)^4$. (When $|G_M^e|> x/8(\log x)^4$, just replace $G_M^\mathrm{o} $ in the argument below by $G_M^e$.)
we can see;
\begin{gather}
    \{\rset \}^1=\{\!\{r_1,r_2,\dots,r_m\}\!\} \notag \\
    =\{\!\{r_1,r_2\}\!\}+\{\!\{r_3,r_4\}\!\}+\cdots\{\!\{r_{2\lfl m/2 \rfl-1},r_{2\lfl m/2 \rfl}\}\!\} \notag \\
    \overset{\ref{abtoba}}{\vdash}\{\!\{g_1\}\!\}+\{\!\{g_3\}\!\}+\cdots+\{\!\{g_{2\lfl m/2 \rfl-1}\}\!\} \notag \\
    \vdash \{\!\{M*|G_M^\mathrm{o}|\}\!\} \notag  \\
    \vdash \{\!\{M*\lceil x/8\elx^4\rceil\}\!\}.\notag
\end{gather}
\end{proof}

\begin{lma}\label{stote}
    Let $\mathcal{S}\subset\ppx$ such that $|\sset|>x/\elx^2$ and fix $2\le\mu\le\elx^2$. Then
    there exists a number sequence $t_0,\dots,t_{m}\ (m\deq\log_2(\lfl\mu-0.1\rfl))$ and $e_1,\dots,e_S$ such that
    $0<e_i\le x/8\elx^5\ ,\ \mu|e_i\ ,\ \sum e_i>x/648\elx^{18}$ for $e_1,\dots,e_S$,
     $t_\bullet\le2(\mu-1)x\ ,\ t_\bullet\equiv2^\bullet\mod \mu$ for
     $t_0,\dots,t_m$ and
    \begin{equation}
        \{\sset\}\vdash\{\!\{t_0,\dots t_m,e_1,\dots e_S\}\!\}
    \end{equation}
    are satisfied.
\end{lma}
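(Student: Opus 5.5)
The plan is to work inside a single residue class modulo $\mu$. Once all the primes used lie in one class, their differences are automatically divisible by $\mu$, which produces the $e_i$. Sums of a suitable number of them land in any prescribed nonzero residue, which produces the $t_j$.

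\emph{Step 1 (residue class).} Since $x>L$, we have $\mu\le(\log x)^2<x$, so every $q\in\sset$ is coprime to $\mu$. By pigeonhole over the $\varphi(\mu)<(\log x)^2$ reduced classes, there is a class $r$ with $\gcd(r,\mu)=1$ such that
\begin{equation*}
\sset_r\deq\{q\in\sset\mid q\equiv r \bmod \mu\}
\end{equation*}
has $|\sset_r|>x/(\log x)^4$. We pass to $\{\sset_r\}^1$. Discarding elements is allowed, since $0$ lies in each $[d]$; so $\{\sset\}^1\vdash\{\sset_r\}^1$.

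\emph{Step 2 (the $t_j$).} I read $m$ as $\lfl\log_2\lfl\mu-0.1\rfl\rfl$, so that $2^j<\mu$ for $0\le j\le m$. Fix such a $j$. Then $2^j\not\equiv0 \bmod \mu$, so $k_j\deq 2^j r^{-1} \bmod \mu$ can be chosen in $\{1,\dots,\mu-1\}$.

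Take $k_j$ unused primes from $\sset_r$ and merge them into one element by repeated use of \eqref{abtoab}. This gives $t_j\equiv k_j r\equiv 2^j \bmod \mu$, and since each prime is at most $2x$, also $t_j\le 2(\mu-1)x$.

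Over all $j$ this consumes at most
\begin{equation*}
(m+1)(\mu-1)\le (\log x)^2\bigl(\log_2(\log x)^2+1\bigr)
\end{equation*}
primes. By \eqref{2592log18} this is negligible against $x/(\log x)^4$. The blocks of primes are disjoint, so Lemma \ref{sttost} lets us combine these reductions with what follows.

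\emph{Step 3 (the $e_i$).} Let $q_1<\dots<q_n$ be the remaining elements of $\sset_r$, with $n>x/(2(\log x)^4)$, and set $g_k=q_{k+1}-q_k$. As in Lemma \ref{rtov}, we have $\sum g_k<x$. Hence fewer than $8(\log x)^5$ indices $k$ have $g_k>x/8(\log x)^5$.

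Now pair $(q_1,q_2),(q_3,q_4),\dots$ and apply \eqref{abtoba} to each pair, giving the element $g_{2i-1}$. Keep only those $g_{2i-1}\le x/8(\log x)^5$ and drop the others. The kept values are the $e_i$: each is a positive multiple of $\mu$ (positive because the primes are distinct), and each is at most $x/8(\log x)^5$.

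Their number is at least $n/2-1-8(\log x)^5$, which exceeds $x/(8(\log x)^4)$ by \eqref{2592log18}. Since each $e_i\ge\mu\ge2$, their sum is then far larger than $x/648(\log x)^{18}$.

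\emph{Main obstacle.} The only real difficulty I foresee is bookkeeping: making the counts precise (including the reading of $m$), and checking that the reserved primes and the discarded large gaps do not spoil the lower bounds. Every such bound reduces to $x>2592(\log x)^{21}$. The weak target $x/648(\log x)^{18}$ suggests the author loses more factors than needed, so there should be ample room.
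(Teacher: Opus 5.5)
Your argument does prove the lemma as printed, and your Step 2 is the paper's construction of the $t_j$: blocks of $k_j\le\mu-1$ primes from one reduced residue class, merged by \eqref{abtoab}. However, the printed bound $\sum e_i>x/648(\log x)^{18}$ is a typo for $\sum e_i>x^2/648(\log x)^{18}$. That stronger bound is what the paper's proof derives in \eqref{deltaestimate}; note that $648=2\cdot18^2$, which matches $R\approx x/17(\log x)^9$. It is also what the lemma is used for. In the proof of Theorem \ref{primestointerval}, the $E_i$ must satisfy the hypothesis $\sum\epsilon_i>2(m+1)(\mu-1)x+4x$ of Lemma \ref{mutomu1}. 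The paper gets this from $x^2/648(\log x)^{18}>4(\log x)^3x$, which is exactly \eqref{2592log18}, since $2592=4\cdot648$. So the ``ample room'' you noticed is not really there.

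Against the intended bound, your Step 3 fails structurally, not just in the constants. The $e_i$ you keep are a subfamily of the consecutive gaps $g_1,g_3,\dots$, and these gaps tile $[q_1,q_n]$. Hence $\sum e_i\le q_n-q_1\le x$ however the counting is done, which is already short of the $4x$ that Lemma \ref{mutomu1} needs.

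The missing idea is the paper's nested pairing inside a short interval:
\begin{itemize}
\item Split $[x,2x]$ into $\lceil 8(\log x)^5\rceil+1$ pieces of length $<x/8(\log x)^5$.
\item By pigeonhole, find $R=\lceil x/17(\log x)^9\rceil$ primes $d_1<\dots<d_R$ of the chosen class in one piece.
\item Set $e_k=d_{R+1-k}-d_k$ for $k\le\lfloor R/2\rfloor$.
\end{itemize}
Each $e_k$ is still below the piece length. This individual smallness is what lets the remainders $\rho_k$ in Lemma \ref{mutomu1} be absorbed by the copies of $\mu$. But the intervals $[d_k,d_{R+1-k}]$ are nested rather than disjoint. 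Since the $d_i$ are distinct odd numbers, $e_k\ge 2(R+1-2k)$, whence $\sum_k e_k>(R-1)^2/2>x^2/648(\log x)^{18}$.

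Your Steps 1 and 2 can be kept as they are. Draw the primes for the $t_j$ from outside this cluster, as the paper does with $\mathcal{S}_w\setminus D$.
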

\begin{proof}
    For $0\le v <\mu $, let
$\sset_v\deq\{r\in\rset\ |\ r\equiv v\mod \mu\}$.
 If $\sset_{x}\ne \emptyset $, then
$\overline {x}\in(\integer/M\integer)^{\times}$ because $\sset\subset\ppx$ is a set of prime numbers larger than $\mu$.
In particular $\sset_{0}=\emptyset$.

By the pigeonhole principle, there exists
Let $1\le w\le  \mu-1$ such that $|\sset_{w}|\ge \lceil x/2(\log x)^4\rceil$.

Divide $[x,2x]$ into $\lceil 8(\log x)^5\rceil+1$
intervals $I_k\ (1\le k\le\lceil 8(\log x)^5\rceil+1)$ whose length is shorter than $x/8(\log x)^5$. At least one of them has more than
\begin{equation}
R\deq\Bigl\lceil\frac{x}{17 (\log x)^9}\Bigr\rceil
\Bigl(<\frac{x}{2(\log x)^4}\cdot\frac{1}{\lceil 8(\log x)^5\rceil+1}\Bigr). \notag
\end{equation}
elements of $\rset_{w}$.
Assume that $D'\deq I_m\cap\rset_{w}$ has cardinality larger than 
$R$.
Construct $D=\{d_1,d_2,\dots,d_R\}\subset D'\ (d_1<d_{2}<\dots<d_R)$ by choosing elements of $D'$.
We write $\sset_w\setminus D =\{s_1,\dots,s_Q\} (s_1<s_2<\dots<s_Q)$.
there is number

Let $e_1\deq d_R-d_1,e_2\deq d_{R-1}-d_{2},\dots,
e_{\lfloor R/2 \rfloor} \deq d_{R-\lfloor R/2 \rfloor+1}-d_{\lfloor R/2 \rfloor} $.
All elements of this set are multiples of $M$ and
\begin{equation}\label{deltaestimate}
\sum_{i=1}^{\lfl R/2\rfl}  e_i>(R-1)^2/2>x^2/648(\log x)^{18}.
\end{equation}
(consider that all elements in $D$ are $\mathrm{odd}$ primes and $e_k\ge2R+2-4k$.)
We see that all elements in $\Delta$ are smaller than $x/8(\log x)^5$.

The cardinality of $\sset_{w}\setminus D$ is larger than
\begin{equation}\label{cardofq2wdash}
\frac{x}{2(\log x)^4}\ - R\ >(\mu-1)\cdot (m + 1).
\end{equation}
\emptyline

Since $\overline{w}\in(\integer/\mu\integer)^\times$, there exists $1\le u_k\le \mu-1\ (0\le k \le m)$ such that
$w\cdot u_k\equiv 2^k \pmod \mu $. Let 
\begin{equation}
U_0\deq0\ ,\ U_k\deq\sum_{i=1}^k u_i\ ,\ t_k\deq\sum_{i=U_{k-1}+1}^{U_k}s_i.
\end{equation}
(\ref{cardofq2wdash} means $U_m<Q.$)
We can see $t_i\equiv w\times u_i\equiv 2^i \mod \mu$
and $t_i<(M-1)\cdot 2x\ (0\le i\le m)$.
Our goal is to gain ().
\begin{gather}
    \{\sset\}\vdash \{\sset_w\}=\{\sset_w\setminus D\}+\{D\} \notag \\
    \vdash \{\!\{s_1,\dots,s_Q\}\!\}+\{\!\{ d_1,\dots,d_R  \}\!\} \notag  \\
    \vdash \{\!\{t_1,\dots ,t_m,e_1,\dots,e_S   \}\!\}  \notag
\end{gather}

\end{proof}

\begin{lma}\label{mutomu1}
Let $2\le\mu\le \elx^2$ and $m\deq\lfl\log_2 (\mu -0.1)\rfl$.
Let $\tau_0,\dots,\tau_m$ be number sequence which satisfies
\begin{equation}
0<\tau_i\le 2(\mu-1)x\ ,\ \tau_i\equiv2^i \mod\mu.
\end{equation}
And let $\eps_1,\dots\eps_T$ be number sequence which satisfy 
\begin{gather}
\mu|\eps_i\ ,\ 0<\eps_i\le\lceil\frac{x}{8\elx^5}\rceil \notag \\
\sum_{i=1}^T e_i>2(m+1)(\mu-1)x + 4x. \notag
\end{gather}
Then there exist $V<T$ such that $\sum_{i=V}^T\eps_i>4x$ and
    \begin{gather}
        \{\!\{\mu*\lceil\frac{x}{8\elx^4}\rceil,\tau_0,\dots,\tau_m,\eps_1,\dots,\eps_T \}\!\} \notag \\
        \vdash\{\!\{ \mu*\lceil\frac{x}{8\elx^5}\rceil,1,2,\dots,2^m,\eps_V,\dots\eps_T\}\!\}
    \end{gather}
    are satisfied.
\end{lma}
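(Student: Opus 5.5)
The plan is to turn each $\tau_i$ into $2^i$ by subtracting from it a block of consecutive $\eps$'s and then some copies of $\mu$, using (\ref{bbminussuma}) and (\ref{bmma}). The copies of $\mu$ and the $\eps$'s that are not used are carried along unchanged via Lemma \ref{sttost}.

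\textbf{Step 1 (what must be removed).} Since $2^i\le 2^m<\mu$ (because $m=\lfl\log_2(\mu-0.1)\rfl$), $\tau_i>0$ and $\tau_i\equiv 2^i \bmod \mu$, we get $\tau_i\ge 2^i$. Moreover $\delta_i\deq\tau_i-2^i$ is a nonnegative multiple of $\mu$ with $\delta_i<2(\mu-1)x$.

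\textbf{Step 2 (allocate the $\eps$'s greedily).} Treat $i=0,1,\dots,m$ in order. For each $i$, take the next unused $\eps$'s, say $\eps_{j_{i-1}+1},\dots,\eps_{j_i}$, choosing $j_i$ maximal so that their sum is at most $\delta_i$. The remainder
\begin{equation*}
\rho_i\deq\delta_i-\sum_{j=j_{i-1}+1}^{j_i}\eps_j
\end{equation*}
is then a nonnegative multiple of $\mu$. It is smaller than the next $\eps$, which is at most $\lceil x/8\elx^5\rceil$. If we ran out of $\eps$'s before reaching this bound, the hypothesis would be violated, since the total used is at most $\sum_i\delta_i<2(m+1)(\mu-1)x$. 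Write $\rho_i=k_i\mu$ with $k_i\le \lceil x/8\elx^5\rceil/\mu$.

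Now apply (\ref{bbminussuma}), in the multiset version with $k_i$ copies of $\mu$ included among the subtracted terms as in (\ref{bmma}), simultaneously for all $i$. This gives
\begin{equation*}
\{\!\{\mu*(k_0+\dots+k_m),\tau_0,\dots,\tau_m,\eps_1,\dots,\eps_{j_m}\}\!\}\vdash\{\!\{2^0,\dots,2^m\}\!\}.
\end{equation*}
The strict inequality required in (\ref{bbminussuma}) holds because what is left is $2^i>0$.

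\textbf{Step 3 (bookkeeping and the leftovers).} Set $V\deq j_m+1$. The remaining $\eps$'s satisfy
\begin{equation*}
\sum_{i=V}^T\eps_i>2(m+1)(\mu-1)x+4x-\sum_i\delta_i>4x .
\end{equation*}
This forces $V<T$, because a single $\eps$ is far smaller than $4x$.

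For the copies of $\mu$ we need
\begin{equation*}
\sum_i k_i+\Bigl\lceil\frac{x}{8\elx^5}\Bigr\rceil\le\Bigl\lceil\frac{x}{8\elx^4}\Bigr\rceil .
\end{equation*}
Since $\sum_i k_i\le (m+1)\bigl(\lceil x/8\elx^5\rceil/2+1\bigr)$ and $m+1\le 2\log\log x+1$, this follows from $x>L$, using Lemma \ref{inequalities}. We then discard, by the trivial relation $\sset+\tset\vdash\sset$ (shifting by $0$), any excess copies of $\mu$ beyond $\lceil x/8\elx^5\rceil$. Finally, Lemma \ref{sttost} together with transitivity of $\vdash$ combines the pieces into the claimed relation.

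I expect the only delicate point to be this count of the copies of $\mu$. It relies on the remainders $\rho_i$ being bounded by one $\eps$, hence by $x/8\elx^5$, rather than by something of size $x$. That is exactly why the greedy choice in Step 2 must take the $\eps$'s consecutively and maximally.
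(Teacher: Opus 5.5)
Your proposal is correct and follows essentially the paper's argument. As in the paper, you peel consecutive greedy blocks of $\eps$'s off each $\tau_i$, the leftover is bounded by a single $\eps\le\lceil x/8\elx^5\rceil$, and that leftover is cancelled by copies of $\mu$, which fit inside $\lceil x/8\elx^4\rceil$; targeting $\tau_i-2^i$ merely merges the paper's two subtraction steps, and your $V=j_m+1$ is the index that actually matches the resulting multiset.

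The only slip is the bound $m+1\le 2\log\log x+1$, which should read $m+1\le 2\log_2\log x+1$ since $m\le\log_2\mu$. The required count of copies of $\mu$ holds with enormous room either way.
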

\begin{proof}
There exists $j_0$ such that
\begin{equation}
    \sum_{i=1}^{j_0}\eps_i\le\tau_0\le\sum_{i=1}^{j_0 +1}\eps_i\ 
\end{equation}
because $\sum_{i=1}^T \eps_i > 2(m+1)(\mu-1)x+4x>\sum_{i=0}^m \tau_i\ge\tau_0.$
We can repeat this kind of argument recursively.
Assume $(j_{-1}\deq 0,)j_0,\dots,j_K\ (0\le K<m)$ such that 
\begin{equation}
    \sum_{i=j_{k-1}+1}^{j_k}\eps_i\le\tau_k\le \sum_{i=j_{k-1}+1}^{j_k +1}\eps_i\  (0\le k\le K)
    \end{equation}
is satisfird.
Then there exists $j_{K+1}$ such that
\begin{gather}
    \sum_{i=1}^{j_{K+1}}\eps_i\le\tau_{K+1}\le\sum_{i=1}^{j_{K+1} +1}\eps_i
    \end{gather}
because 
\begin{gather}
    \sum_{i=j_K+1}^{T}\eps_i
    =\sum_{i=1}^{T}\eps_i -\sum_{k=0}^{K}\big(\sum_{j_{k-1}+1}^{j_k}\eps_i\big) \notag \\
    >2(m+1)(\mu-1)x+4x-\sum_{k=0}^K \tau_k \notag \\
    \ge2(m+1)(\mu-1)x+4x-2(K+1)(\mu-1)x \notag \\
    \label{remeps}   
    \ge 2(\mu-1)x +4x>\tau_{K+1}
\end{gather}
We can repeat argument above for $K=0,1,\dots,m-1$ to gain number sequence $j_0,\dots j_m$ such that 
\begin{gather}
    \sum_{i=j_{k-1}+1}^{j_k}\eps_i\le\tau_k\le \sum_{i=j_{k-1}+1}^{j_k +1}\eps_i\  (0\le k\le m), \notag \\
    0\le(\rho_k\deq)\ \tau_k-\sum_{i=j_{k-1}+1}^{j_k}\eps_i \le\eps_{j_k+1}\le \lceil\frac{x}{8\elx^5}\rceil   \notag \\
    \sum_{i=j_m +1}^T \eps_i>4x
\end{gather}
are satisfied (Third inequality comes from substitute $m$ to $K$  in culculation \ref{remeps}).$\rho'_i\deq\rho_i-\mu\cdot\lfl\rho_i/\mu\rfl=2^i$
because $0\le\rho'_i<\mu$ and $\rho'_i\equiv\rho_i\equiv2^i \mod\mu.$
At last,
\begin{gather}
    \{\!\{\mu*\lceil\frac{x}{8\elx^4}\rceil,\tau_0,\dots,\tau_m,\eps_1,\dots,\eps_T\}\!\} \notag \\
\vdash\{\!\{\mu*((m+2)\cdot \lceil\frac{x}{8\elx^5}\rceil),\tau_0,\dots,\tau_m,\eps_1,\dots,\eps_{j_0}, \notag \\
\eps_{j_0 +1},\dots,\eps_{j_1},\dots\dots\eps_{j_{m-1}+1},\dots,\eps_{j_m},\eps_{j_m+1},\dots,\eps_T\}\!\} \notag \\
\vdash\{\!\{\mu*((m+2)\cdot \lceil\frac{x}{8\elx^5}\rceil),\rho_0,\dots,\rho_m,\eps_{j_m+1},\dots,\eps_T \}\!\} \notag \\
\vdash\{\!\{\mu*((m+2)\cdot \lceil\frac{x}{8\elx^5}\rceil-\sum_{i=1}^m\lceil\frac{\rho_i}{\mu}\rceil), \notag  \\
\rho_0-\mu\cdot\lfl\frac{\rho_0}{\mu}\rfl,\dots,\rho_m-\mu\cdot\lfl\frac{\rho_m}{\mu}\rfl,\eps_{j_m+1},\dots,\eps_T \}\!\} \notag \\
 \vdash\{\!\{ \mu*\lceil\frac{x}{8\elx^5}\rceil,1,2,\dots,2^m,\eps_{j_m +1},\dots\eps_T\}\!\}
\end{gather}
We can see that by $V\deq j_m$, desired properties for $V$ are satisfied.
 \end{proof}
Now, we can prove Theorem \ref{first step} , \ref{primestointerval}
(The author will restate these theorems.)
 Theorem 3.2 can be proved by simply applying Lemma \ref{rtov},\ref{stote}, and \ref{mutomu1}. 
\primeint*
\begin{proof}[proof of Thm\ref{primestointerval}]
Assume $x>L$, $\pset\subset\ppx$ and $|\pset|>(2x/\elx^2)+9$ are satisfied.
We can construct $\pset_1,\pset_2$ such that $\pset = \pset_1\sqcup\pset_2$,$|\pset_1|\ge|\pset_2|\ge(x/\elx^2)+4$ are satisfied by choosing elements of $\pset$.
By Lemma \ref{rtov} , there exist $M$(and $\mathscr{M}\deq\lfl\log_2(M-0.1)\rfl$) such that
\begin{equation}
\{\pset_1\}\vdash\{\!\{M*\lceil\frac{x}{8\elx^4}\rceil\}\!\}.
\end{equation}
By Lemma \ref{stote} , there exists a number sequence  $T_0,\dots,T_\mathscr{M}$,$E_1,\dots,E_S$ such that $T_i\equiv2^i \mod M$ , $T_i<2(M-1)x$ and $M|E_i$,$\sum E_i>x/648\elx^{18}$ ,$E_i<x/8\elx^5$ are setisfied and
\begin{equation}
    \{\pset_2\}\vdash\{\!\{T_0,\dots,T_\mathscr{M},E_1,\dots ,E_S   \}\!\}
\end{equation}
are satisfied.
By considering $\sum E_i>x/648\elx^{18}>4\elx^3x>2(M-1)(\mathscr{M}+1)+4x$, we can see that
$T_0,\dots,T_\mathscr{M},E_1,\dots ,E_S$ satisfy conditions required for $\tau_\bullet,\eps_\bullet$ in Lemma \ref{mutomu1}.
We can conclude
\begin{gather}
    \{\pset\}=\{\pset_1\}+\{\pset_2\} \notag \\
    \vdash\{\!\{M*\lceil\frac{x}{8\elx^4}\rceil,T_0,\dots,T_\mathscr{M},E_1,\dots,E_S\}\!\} \notag \\
    \vdash \{\!\{M*\lceil\frac{x}{8\elx^5}\rceil,1,2,\dots\,2^\mathscr{M},E_V,\dots ,E_S\}\!\}.
\end{gather}
By definition of $\vdash$, there exists $\cale  $ such that
\begin{gather}
    \sumcirc(\{\pset\})\supset\cale+\sumcirc(\{\!\{M*\lceil\frac{x}{8\elx^5}\rceil,1,2,\dots\,2^\mathscr{M},E_V,\dots ,E_S\}\!\})  \notag \\
    =\cale+I(\sum_{i=0}^\mathscr{M}2^i \ +M\cdot \lceil\frac{x}{8\elx^5}\rceil +\sum_{i=V}^S E_i ).
\end{gather}
last equality sign comes from the fact that 
\begin{equation}
1,2,\dots,2^\mathscr{M},M,\dots(\lceil x/8\elx^5 \rceil times)\dots,M,E_V,\dots,E_S
\end{equation}
is SGS and lemma \ref{intervalfromsgs}.

\end{proof}
Theorem\ref{first step} easily follows from Theorem\ref{primestointerval}
\fstep*
\begin{proof}[Proof of Theorem\ref{first step}]
    It follows from Theorem\ref{primestointerval} that there exist $\cale_1,\calf_1,\cale_2,\calf_2$ such that
    $\cale_1,\cale_2<2x^2\ ,\ \calf_1,\calf_2>4x$ and
    \begin{equation}
        \sumcirc(\qset)\supset\cale_1+I(\calf_1)\ ,\  \sumcirc(\qset\setminus\{p\})\supset\cale_2+I(\calf_2) \notag
    \end{equation}
    are satisfied. So
    \begin{gather}
        \sum_{d\in\mathscr{D}}[d]= \sumcirc(\qset) +p\cdot  \sumcirc(\qset\setminus\{p\}) \notag \\
\supset \cale_1+I(\calf_1) +p\cdot(\cale_2+I(\calf_2)) \notag \\
=(\cale_1+p\cale_2)+I(p\cdot\calf_2+\calf_1). \notag
        \end{gather}
        We can see that $\cale_1+p\cale_2<4x^3$ and $p\cdot\calf_2+\calf_1>4x^2$ 
        so these numbers satisfy conditions required for $E,F$.
\end{proof}

We will define the notion of normality of the set, this notion plays an important role in the last part of the 
proof of Theorem \ref{logicallymainthm}.
\begin{defi}
Let $\{x_i\}_{i=1}^n$ be a finite sequence of real numbers.
We say $\{x_i\}_{i=1}^n$ is \textbf{normal} when
\begin{equation}\label{defnormal}
    x_2\le2x_1\ ,\ x_k\le\sum_{i=1}^{k-1}x_i\ (3\le k\le n)\ ,\  
    \sum_{i=1}^n x_n>1.
\end{equation}
Let $C\deq\{c_1,c_2,\dots,c_w\}\subset\nat\ (c_i<c_{i+1}\ 
for\ 1\le i \le w-1)$,
We say that $C$ is \textbf{conormal} when sequence of real numbers
$1/c_w,1/c_{w-1},\dots,1/c_2,1/c_1$ 
satisfies $(\ref{defnormal})$.
\end{defi}
\begin{lma}\label{extnormal1}
(1) Assume that $C$ and $D$ is set of integers
and $C$ is conormal.
\begin{equation}
C\subset D\subset C\cup\{c_{w}+1,c_{w}+2,\dots,2c_{w-1}-1,2c_{w-1}\}, \notag
\end{equation}
then $D$ is also conormal.

(2) If $C$ is cpnormal and $|D\setminus C|\ge 2$ and

\begin{equation}
C\subset D\subset C\cup\{c_{w}+1,c_{w}+2,\dots,2c_{w}-1,2c_{w}\}, \notag
\end{equation}
then $D$ is also conormal.
\end{lma}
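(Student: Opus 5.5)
The proof is a direct check of the three conditions in (\ref{defnormal}) for the reversed sequence of reciprocals of $D$. Write the new elements as $D\setminus C=\{e_1<\dots<e_r\}$. In both parts every $e_j$ exceeds $c_w$, so the elements of $D$ in increasing order are $c_1,\dots,c_w,e_1,\dots,e_r$. Hence the sequence to test is
\begin{equation*}
1/e_r,\ 1/e_{r-1},\ \dots,\ 1/e_1,\ 1/c_w,\ 1/c_{w-1},\ \dots,\ 1/c_1 .
\end{equation*}
It is the old sequence $1/c_w,\dots,1/c_1$ with a block of $r$ new terms placed in front. I treat $D=C$ as trivial and assume $r\ge1$ in (1). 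I also assume $w\ge2$, which the hypothesis of (1) needs anyway. Three of the checks are immediate.
\begin{itemize}
\item The total-sum condition: the sum only grows, since we add positive terms, so it stays $>1$.
\item Old terms $1/c_j$ with $j\le w-2$: each sits at position $\ge3$ in the old sequence, so $1/c_j\le\sum_{i>j}1/c_i$ by conormality of $C$. The preceding sum in the new sequence contains this old preceding sum, so the condition persists.
\item Lower bound on new terms: in (1) each new term satisfies $1/e_j\ge 1/(2c_{w-1})$, and in (2) it satisfies $1/e_j\ge1/(2c_w)$.
\end{itemize}

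For part (1), four checks remain.
\begin{itemize}
\item Second term, case $r\ge2$: we need $e_r\le 2e_{r-1}$. This holds because $e_r\le 2c_{w-1}<2c_w<2e_{r-1}$.
\item Second term, case $r=1$: we need $c_w\le 2e_1$. This holds trivially since $e_1>c_w$.
\item New terms $1/e_j$ with $j\le r-2$: the two largest new reciprocals give $1/e_r+1/e_{r-1}\ge 2/(2c_{w-1})=1/c_{w-1}$. This exceeds $1/e_j$ because $e_j>c_w>c_{w-1}$.
\item The term $1/c_w$ when $r\ge2$: the same two-term bound gives $1/e_r+1/e_{r-1}\ge1/c_{w-1}\ge 1/c_w$.
\item The term $1/c_{w-1}$: its preceding sum is at least $1/e_r+1/c_w$. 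We have $1/e_r\ge 1/(2c_{w-1})$, and $1/c_w\ge1/(2c_{w-1})$ by the conormality condition $c_w\le 2c_{w-1}$. So the sum is at least $1/c_{w-1}$.
\end{itemize}

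For part (2) we have $r\ge2$, and the same scheme works with $c_w$ replacing $c_{w-1}$ in the upper bound on the new elements.
\begin{itemize}
\item Second term: $e_r\le 2c_w<2e_{r-1}$.
\item New terms $1/e_j$ with $j\le r-2$, and the term $1/c_w$: each is bounded by $1/e_r+1/e_{r-1}\ge 1/c_w$. Here $1/e_j<1/c_w$ since $e_j>c_w$.
\item The term $1/c_{w-1}$: this is where the hypothesis $|D\setminus C|\ge2$ is really used. The preceding sum is at least $1/e_r+1/e_{r-1}+1/c_w\ge 1/c_w+1/c_w=2/c_w$. By the old condition $c_w\le 2c_{w-1}$ this is $\ge 1/c_{w-1}$.
\end{itemize}

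The main obstacle is not depth but bookkeeping. The boundary positions are exactly where the old doubling condition $c_w\le2c_{w-1}$ and the upper bounds $2c_{w-1}$ or $2c_w$ on the new elements must be combined. These positions are the second term, the term $1/c_w$, and the term $1/c_{w-1}$. In part (2) a single new element would not suffice to cover $1/c_{w-1}$, which explains the hypothesis $|D\setminus C|\ge2$.
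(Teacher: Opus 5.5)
Your approach matches the paper's: you list $D$ in increasing order and verify the conditions of (\ref{defnormal}) for the reversed reciprocals term by term. The only delicate positions are the boundary ones, where $c_w\le 2c_{w-1}$ is combined with the upper bound $2c_{w-1}$ or $2c_w$ on the new elements. There is one organizational difference. The paper proves (2) first and deduces (1) from it when $|D\setminus C|\ge 2$, since $\{c_w+1,\dots,2c_{w-1}\}\subset\{c_w+1,\dots,2c_w\}$, so it only has to treat the one-new-element case of (1) by hand. You verify (1) directly for every $r$. Both routes work.

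There is one slip to fix. In part (1) with $r=1$, the sequence begins $1/e_1,\,1/c_w$. So the condition $x_2\le 2x_1$ reads $1/c_w\le 2/e_1$, i.e.\ $e_1\le 2c_w$. You instead checked $c_w\le 2e_1$, which is the reverse inequality and not what the definition requires. The correct inequality still follows at once from the hypothesis, $e_1\le 2c_{w-1}<2c_w$. This is exactly the paper's check $1/c_w<2/(2c_{w-1})\le 2/d$. With that line corrected, the rest of your verification is sound.
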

\begin{proof}
We prove (2) first.
Assume that 
\begin{equation}
C=\{c_1,c_2,\dots,c_w\}\subset\nat\ (c_i<c_{i+1} 
for\ 1\le i \le w-1) \notag
\end{equation}
is conormal and
\begin{equation}
D=\{c_1,c_2,\dots,c_w,d_1,d_2,\dots d_v\}\ (v\ge 2) \notag
\end{equation}
and 
\begin{equation}
c_1<c_2<\cdots<c_{w-1}<c_w<d_1<\cdots<d_{v-1}<d_v\le 2c_w. \notag
\end{equation}
By assumption we gain
\begin{gather}\notag
1/d_{v-1}<2/2c_w\le2/d_v,\\ \notag
1/d_{v-k}<1/c_w=1/2c_w+1/2c_w<1/d_{v-1}+1/d_v\ (2\le k\le v-1),\\ \notag
1/c_{w-1}\le 2/c_w<1/c_w+1/d_{v-1}+1/d_v,\\ \notag
1/c_{w-k}\le1/c_w+1/c_{w-1}+\cdots+1/c_{w-k+1}\ (2\le k\le w-1),\\ \notag
\sum 1/d_\bullet+\sum1/c_\bullet>\sum1/c_\bullet>1.
\end{gather}
These inequalities mean that $D$ is normal. So Lemma (2) is proved.
When $|D\setminus C|\ge 2$, Lemma (1) follows from
Lemma (2). So it is enough to show that Lemma (1) when $|D\setminus C|=1$.
Assume $C=\{c_1,\dots,c_w\}$ is conormal and let $D=\{c_1,c_2,\dots,c_w,d\}$ with 
\begin{equation}
c_1<c_2<\cdots <c_{w-1}<c_w<d\le 2c_{w-1}. \notag
\end{equation}
By assumption we gain
\begin{gather}\notag
1/c_w<2/2c_{w-1}\le 2/d,\\ \notag
1/c_{w-1}=1/2c_{w-1}+1/2c_{w-1}<1/d+1/c_w,\\ \notag
1/c_{w-k}\le1/c_w+\cdots+1/c_{w-k+1}\ (2\le k\le w-1),\\ \notag
1/d +\sum 1/c_\bullet>\sum 1/c_\bullet>1.
\end{gather}
These inequalities mean that $D$ is conormal. So Lemma (1)
is proved.
\end{proof}

\begin{prop}\label{inverse} 
Let $B = \{b_1,b_2,\dots,b_m\}\subset\nat 
\ (b_1<b_2<\cdots<b_m)$ 
be a finite set that satisfies the conditions below.

1.$L<\min(B) = b_1$

2.\begin{equation}
\sum_{b\in B}\frac{1}{b} >1+\frac{9}{2L}
=1+\frac{1}{2L}+\frac{2}{L}+\frac{1}{L}+\frac{1}{2L}+\cdots \notag
\end{equation}
Then there exist $1\le l\le m$ such that
$\{b_1,b_2,\dots,b_l\}$ is conormal.
\end{prop}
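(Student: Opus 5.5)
The plan is to take the shortest prefix whose reciprocal sum exceeds $1$ and, if it is not yet conormal, extend it by a few more elements of $B$. The slack $9/(2L)$ in condition 2 pays for this extension. Write $S_l\deq\sum_{i\le l}1/b_i$. For the prefix $\{b_1,\dots,b_l\}$, unwinding the definition, conormality means three things:
\begin{itemize}
\item[(a)] $b_l\le 2b_{l-1}$;
\item[(b)] $1/b_i\le T_{i+1}(l)\deq\sum_{j=i+1}^{l}1/b_j$ for every $1\le i\le l-2$;
\item[(c)] $S_l>1$.
\end{itemize}
The basic observation is that every term is $<1/L$, since $b_1>L$. So a violation of (b) at an index $i$ forces $T_{i+1}(l)<1/b_i<1/L$. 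Hence violations can only occur inside the final block of the prefix, the one carrying reciprocal mass $<1/L$. Moreover, extending $l$ only increases every $T_{i+1}(l)$, so it can only repair old violations of (b).

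First, let $l_0$ be minimal with $S_{l_0}>1$. Then $S_{l_0}<1+1/L$, so the terms $b_j$ with $j>l_0$ carry reciprocal mass more than $9/(2L)-1/L=7/(2L)$; in particular they exist.

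Second, I extend the prefix. I would define $l\ge l_0$ to be the first index at which both (a) holds and $T_{l_0}(l)\ge 1/L$. The second requirement, together with the observation above, makes (b) automatic for all $i<l_0$. For those $i$ we have $1/b_i<1/L\le T_{l_0}(l)\le T_{i+1}(l)$.

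Third, I must check (b) for $l_0\le i\le l-2$, i.e.\ inside the newly added block. Here I would argue as in the proof of Lemma~\ref{extnormal1}. Where consecutive ratios satisfy $b_{j+1}\le 2b_j$, each term is bounded by twice its successor. A term is then dominated by the next two reciprocals, as in the chain of inequalities used for part (2) of that lemma. Where a ratio exceeds $2$, the reciprocals at least halve, so the earlier reciprocal is still controlled by a geometric sum. The only failure is at the end of the block, which is exactly why $l$ is stopped at a point where (a) holds.

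The main obstacle is showing that this stopping index $l$ exists within $B$, i.e.\ that the mass $7/(2L)$ beyond $l_0$ suffices. I would split the extension into pieces matching the decomposition $\frac1{2L}+\frac2L+\frac1L+\frac1{2L}$ written in condition 2:
\begin{itemize}
\item a piece of mass $\le 1/(2L)$ absorbs the overshoot of $l_0$;
\item a piece of mass $\le 2/L$ accumulates the tail mass $T_{l_0}\ge 1/L$ needed for the second stopping requirement, allowing for a factor $2$ loss from the doubling condition;
\item a piece of mass $\le 1/L$ covers runs of ratio-$>2$ jumps, whose reciprocals form a geometric series bounded by twice its first term, itself $<1/L$;
\item a final piece of mass $\le 1/(2L)$ accounts for the last term needed to satisfy (a).
\end{itemize}
If all of $B$ were exhausted without reaching a valid $l$, the total reciprocal mass past $l_0$ would be at most this sum, namely $4/L$. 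Whether the pieces really stay within these four budgets is the delicate part of the accounting, and the one I expect to need the most care. Once the existence of $l$ is secured, (a)--(c) hold, and $\{b_1,\dots,b_l\}$ is conormal.
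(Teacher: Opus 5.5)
\textbf{Gap in your third step.} Your unwinding of conormality into (a)--(c), the choice of $l_0$, and the remark that a violation of (b) needs later mass below $1/L$ are all fine. The argument breaks in your third step, and with it the stopping rule of the second step. Consecutive ratios $b_{j+1}\le 2b_j$ do not make $1/b_i$ dominated by the next two reciprocals: they only give $1/b_{i+1}+1/b_{i+2}\ge 3/(4b_i)$. With all ratios exactly $2$, no number of later terms suffices. The chain in Lemma~\ref{extnormal1} works because all new elements lie in $(c_w,2c_w]$, within a factor $2$ of one fixed element, not because of consecutive ratios. A run of ratios exceeding $2$ hurts (b) rather than helping: $b_{j+1}>2b_j$ for all $j\ge i$ forces $\sum_{j>i}1/b_j<1/b_i$.

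Here is a concrete failure. After $b_{l_0}$, place elements ending with some $c$ such that $T_{l_0}$ up to $c$ lies in $[\frac1L-\frac1{3c},\frac1L)$; this is easily arranged. Then add $3c,9c,18c$, and afterwards anything that makes condition 2 hold. At $3c$ the mass reaches $1/L$ but (a) fails. It fails again at $9c$ and holds at $18c$, so your rule picks $b_l=18c$. But $\frac1{3c}>\frac1{9c}+\frac1{18c}$, so (b) fails at index $l-2$ and the prefix is not conormal.

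\textbf{Existence of $l$ is left unproved.} Suppose you repair the rule, say by taking the first conormal prefix beyond $l_0$. Then the existence of $l$ is the entire content of the proposition, and you leave that accounting open. Your budget also misreads condition 2. The trailing dots in $\frac1{2L}+\frac2L+\frac1L+\frac1{2L}+\cdots$ denote an infinite geometric tail $\frac1L+\frac1{2L}+\cdots=\frac2L$. That tail comes from an iteration with shrinking thresholds, not from four fixed pieces totalling $\frac4L$.

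\textbf{The missing idea is the paper's dyadic blocking.} Put $K=b_{l_0}$ (so $K>2L$) and $B_j=B\cap(2^jK,2^{j+1}K]$, and consider only prefixes $B_{\le i}$ ending at block boundaries. Inside a block any element is at most twice any other. So each element is dominated by the next two elements of its block, and (a) holds automatically. Hence $B_{\le i}$ is conormal as soon as the last block's mass $S_i$ is at least every earlier reciprocal not already covered by the mass lying between that element and $B_i$.

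\textbf{How the budget $9/(2L)$ arises.} Argue by contraposition, so every block mass stays below the current threshold. The thresholds are $1/L$ first (covering $b_1,\dots,b_{l_0}$), then $1/K$ once the accumulated mass exceeds $1/L$, then $1/(2^{n(1)}K)$, and so on. Each stage costs less than twice its threshold, and the thresholds at least halve. This gives $\sum_{b\in B}1/b<1+\frac1{2L}+\frac2L+\frac2L=1+\frac9{2L}$.
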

\begin{proof}
Our goal is to show that if ``$B$ satisfies condition 1 and there is $\mathbf{no}$ $1\le l\le m$
such that $\{b_1,b_2,\dots,b_l\}$ is conormal'',
then ``condition 2 fails''.
Assume that $B$ satisfies the assumption above. Let
\begin{equation}
K\deq \min(\{b_k\ |\ \sum_{i=1}^k1/b_i>1\})
\end{equation}

(Condition 2 fails if the set on the right hand side is empty.)
Note that $K>2L$ and $1< \sum_{i=1}^k1/b_i<1+1/2L$.

Let $K_j\deq 2^jK$ and $\cali_j\deq(K_j,K_{j+1}]\ (j\in\nat)$,
and $B_n\deq\cali_n\cap B\ (n\in\nat)$,
$B_{-1}\deq B\cap[0,K]$ , $B_{\le n}\deq\bigcup_{i=-1}^{n}B_i$. Define
\begin{equation}
S_n\deq\sum_{b\in B_n}\frac{1}{b}. \notag
\end{equation}
If $i\ge n(0)$ then $S_i<1/L$ because if not, then $B_{\le i}$ is conormal and it violates the assumption.
Let $n(0)\deq0$ and 
$n(1)\deq \min(\{m\ |\ \sum_{i=n(0)}^{m-1}S_i>1/L\})$. (Condition 2 fails if the set on the right hand side is empty , then $\sum1/b<1+1/2L+1/L$ .)
Note that $\sum_{i=n(0)}^{n(1)-1}S_i<2/L$.

If $i\ge n(1)$ then $S_i < 1/K_{n(0)}$ because if not, then
$B_{\le i}$ is conormal and it violates the assumption.
 Let $n(2)\deq \min(\{m\ |\ \sum_{i=n(1)}^{m-1}S_i>1/K_{n(0)}\})$.
(Condition 2 fails if the set on the right hand side is empty.) Note that
$\sum_{i=n(1)}^{n(2)-1}S_i<2/K_{n(0)}$.

If $i \ge n(2)$ then $S_i<1/K_{n(1)}$
because if not, then $B_{\le i}$ is conormal and it violates the assumption.
Let $n(3)\deq \min(\{m\ |\ \sum_{i=n(2)}^{m-1}S_i>1/K_{n(1)}\})$.
(Condition 2 fails if the set on the right hand side is empty.) Note that
$\sum_{i=n(2)}^{n(3)-1}S_i<2/K_{n(1)}$.

 Because $B$ is finite. We can continue this argument until
 \begin{equation}\label{knz}
 \{m\ \big|\ \sum_{i=n(Z)}^{m-1}S_i>1/K_{n(Z-1)}\}=\emptyset
 \end{equation}
holds.($Z$ is smallest integer such that \ref{knz} is satisfied.)  This means condition 2 fails because
\begin{gather}
\sum_{b\in b}\frac{1}{b}=\sum_{i=-1}^\infty S_i<1+\frac{1}{2L}+\Big(\sum_{i=0}^{Z-2} \frac{2}{K_{n(i)}}\Big)+\frac{1}{K_{n(Z-1)}}  \notag
\\\le1+\frac{1}{2L}+\frac{2}{L}+\frac{1}{L}\cdots+\frac{1}{2^{Z-2}L}<1+\frac{9}{2L}.\notag
\end{gather}

(note that $n(t)\ge t$) 

\end{proof}

\section{Proof of main theorem}

Recall that we fixed $L \deq 8.371\times10^{45}$.
Let $L_k\deq 2^kL$.

For $k\in\nat$, define $J_k\deq[L_k,L_{k+1})$.

\begin{thm}\label{logicallymainthm}
Assume that the set of prime numbers  $\pset$ satisfies conditions 1, 2, and 3 below.

1.$L<\min(\pset)$

2.$|J_k\cap\pset| = 0 $ or $|J_k\cap\pset|>2L_k/(\log L_k)^2 + 10$
for all $k\in \nat$.

3.
\begin{equation}
\sum_{p\in\pset}\frac{1}{p} >1+\frac{9}{2L} \notag
\end{equation}
Then $N\deq \Pi(\pset)$ is semiperfect.

\end{thm}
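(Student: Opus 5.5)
The plan is to reduce to a well-structured subset of $\pset$ and then use Lemma \ref{spdivisor}. List $\pset=\{b_1<b_2<\cdots<b_m\}$. Condition 1 gives $L<b_1$, and condition 3 is exactly hypothesis 2 of Proposition \ref{inverse}. So there is $l$ with $\pset_g\deq\{b_1,\dots,b_l\}$ conormal. By Lemma \ref{spdivisor} it suffices to show that $N''\deq\Pi(\pset_g)$ is semiperfect, since $N''\mid N$.

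Because $\pset_g$ is an initial segment of $\pset$, condition 2 still holds for every block $J_k$ except possibly the last block $J_{k^*}$ met by $\pset_g$, which may have been truncated. I would handle this by choosing $l$ slightly more carefully. I would either enlarge the truncated top block back to the full $J_{k^*}\cap\pset$, using Lemma \ref{extnormal1} to keep conormality (the added primes lie in $(c_w,2c_w]$ or $(c_w,2c_{w-1}]$), or else drop it and use the previous dense block. Either way I aim for a conormal $\pset_g$ whose largest block $\qset\deq J_{k^*}\cap\pset_g$ satisfies $|\qset|>2L_{k^*}/(\log L_{k^*})^2+10$ with $\qset\subset[x,2x]$ for $x=L_{k^*}>L$.

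Next I build the representation of $N''$. Write $\pset_g=\{c_1<\cdots<c_w\}$. The reciprocals $1/c_w,\dots,1/c_1$ form a normal sequence, so each term is at most the sum of the terms preceding it, and the total exceeds $1$. A greedy choice, adding $1/c_i$ in decreasing order whenever the running sum stays $\le1$, then yields a subset $S\subset\pset_g\setminus\qset$ with
\begin{equation}
0\le 1-\sum_{q\in S}\frac1q<\frac{C}{x}\notag
\end{equation}
for a small constant $C$. Here normality is used exactly to control the gaps, and the first inequality $x_2\le 2x_1$ takes care of the start. Multiplying by $N''$, the divisors $N''/q$ for $q\in S$ are distinct proper divisors summing to $N''-R$ with $0\le R<CN''/x$.

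It remains to write $R$ as a sum of further distinct proper divisors not already used. I would use divisors of the form $(N''/\Pi(\qset))\cdot(\Pi(\qset)/d)$, or more conveniently $\frac{N''}{p\,q'}$-type divisors coming from $\mathscr{D}=\qset\cup p(\qset\setminus\{p\})$. Setting $M\deq N''/(p\,\Pi(\qset\setminus\{p\}))\cdot(\text{suitable cofactor})$, Theorem \ref{first step} gives
\begin{equation}
M\bigl(E+I(F)\bigr)\subset\sum_{d}[d],\qquad E<4x^3,\ F>4x^2,\notag
\end{equation}
over divisors $d$ distinct from the $N''/q$, $q\in S$.

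Two points remain. First, the offset $ME$ must be absorbed: I would rerun the greedy step with target $1-ME/N''$ rather than $1$, which is harmless since $ME/N''$ is tiny. Second, the residual must be a multiple of $M$ lying in the window $I(MF)$. For divisibility I would take $M$ to be the gcd of the relevant divisors, so that all the $N''/q$ with $q\notin\qset$ are multiples of $M$. For size, $F>4x^2$ must dominate $CN''/(xM)$, which is where the scaling of Theorem \ref{first step} ($F$ of order $x^2$ against gap of order $1/x$) is tuned.

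I expect this last matching to be the main obstacle: choosing $M$ so that the residual after the greedy step is automatically divisible by $M$, and making sure the greedy gap is small compared with $MF$. My first attempt is to run the greedy step only over primes outside $\qset$, with granularity at least $1/(2x)$. If the gap is then too coarse, I would refine it using single primes of $\qset$ before invoking the interval.
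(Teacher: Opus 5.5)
There is a genuine gap, and it is at the step you flagged as the main obstacle. It is not a tuning problem: the sizes differ by an exponential factor.

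For $M\cdot\mathscr D$ to consist of divisors of the squarefree $N''$, $M$ must be coprime to every prime of the set $\qset'$ you feed into Theorem~\ref{first step}. Hence $M\le N''/\Pi(\qset')\le N''x^{-|\qset'|}$. On the other hand $E+F\le\sum_{d\in\mathscr D}d\le 5x^2|\qset'|$. Your scheme needs $ME\le R\le M(E+F)$, that is,
\[
0\le 1-\frac{ME}{N''}-\sum_{q\in S}\frac1q\le\frac{MF}{N''}\le 5x^2|\qset'|\,x^{-|\qset'|},\qquad |\qset'|>\frac{2x}{(\log x)^2}.
\]
So you would need to approximate $1$ by a reciprocal sum with error about $x^{-2x/(\log x)^2}$. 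The greedy step gives nothing of that kind. Normality only bounds the greedy error by the smallest reciprocal available, here $1/c_w>1/(2x)$, even after your refinement with primes of $\qset$. Producing an exponentially finer approximation is essentially the original problem again.

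Restricting the greedy step to $\pset_g\setminus\qset$ is worse. Those primes are below $x$, so they cannot give granularity $1/(2x)$. Normality of $1/c_w,\dots,1/c_1$ is not inherited once its smallest terms are deleted. And since conormality of $\pset_g$ may rely on the top block to push the reciprocal sum past $1$, $\sum_{q\in\pset_g\setminus\qset}1/q$ may even be below $1$.

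What is missing is a way to grow an interval of length about $x^2$ into one of length greater than $N''$, using divisors other than the $N''/c$. One application of Theorem~\ref{first step} cannot do this. The paper starts a slowly growing sequence with $\calf$ and adjoins every product of $k$ primes of the dense block $\pset_T$, layer by layer in $k$:
\begin{itemize}
\item Within a layer it walks from $\min\binom{\pset_T}{k}_\Pi$ to $\max\binom{\pset_T}{k}_\Pi$ along a $\prec$-chain, replacing one prime at a time. This works because all primes of $\pset_T$ lie within a factor $2$ of each other.
\item It passes to the next layer because $\sum\binom{\pset_T}{k}_\Pi\ge\binom{l}{k}p_1\cdots p_k>p_1\cdots p_{k+1}$, which is estimate (\ref{binom2x}).
\item At the middle layer, the bound $\Pi(\overPT)/\Pi(\undPT)>L_T$ from (\ref{powofx/l4}) lets it bring in the smaller primes $q_i$ as cofactors. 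It then climbs symmetrically up to $N'/p_l$.
\end{itemize}
Only then is conormality used, and not for a greedy approximation. It serves to append $N'/p_{l-1},\dots,N'/p_1,N'/q_W,\dots,N'/q_1$ while keeping the slowly growing property. The total then exceeds $N'>\cale$, so $N'\in\cale+I(\calf')$.

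Your reduction to a conormal set whose top block is dense matches the paper. Without this layered extension, though, the argument does not close.
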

\begin{proof}
Let $ \pset_m\deq \pset\cap J_m $. By Proposition \ref{inverse}, there exists
\begin{equation}
p_{u}\in \pset_T\deq
\{p_1,p_2,\dots,p_l\}
\ (T\in\nat)\ (p_i<p_{i+1} for\ 1\le i\le l-1) \notag
\end{equation}
 such that 
$\pset\cap[0,p_u]$ is normal.
Define $\pset_{<T}\deq\pset\cap[0,L_k)\deq\{q_1,\dots q_W\}
\ (q_i<q_{i+1}\ for\ 1\le i\le W-1)$ (\textbf{Notice}; We will use the fact that $|\pset_T|=l>2L_T/(\log L_T)^2+10$ follows from condition 3 frequently.)

From Lemma (1), (2).
(If $u=1$, apply Lemma (2), if not, then apply Lemma (1).),
Set gained by adding rest of elements in $\pset_{T}$,
say
\begin{equation}
\pset'\deq\pset_{<T}\cup \pset_T= \{q_1,q_2,\dots,q_W,p_1,p_2,\dots p_l\} \notag
\end{equation}
is also normal. We will prove that $N'=\Pi(\pset')$ is semiperfect.
And $N$ is. 

 (O) Our goal is to extend the interval of length   $\calf>4L_T^2$ ($\calf$ will be defined later) derived from Theorem \ref{first step} by adding $[d] =\{0,d\}$  ($d$: proper divisors of $N'$,$d\not\in\pset_T\cup p_1\cdot(\pset_T\setminus\{p_1\})$) while keeping the shape of interval until its length exceeds $N'$. 

  To achieve this, we must make a slowly growing sequence with
$\calf$ first and followed by proper divisors of $N'$. (Not element of $\pset_T\cup p_1\cdot(\pset_T\setminus\{p_1\})$, used in Theorem \ref{first step}.)
 
(I) By substituting $L_T$ for $x$
and $\pset_T$ for $\qset$,
Conditions of Theorem \ref{first step} is satisfied.
And we obtain
\begin{equation}
    (\sum_{i=1}^l[p_i])+(\sum_{j=2}^l[p_1p_j])
    \supset\mathcal{E}+I(\mathcal{F}) \notag
\end{equation}
with $\cale<4L_T^3<N'$ , $\calf>4L_T^2>p_{l-1}p_l$
, we can take in all elements of $\binom{\pset\setminus\{ p_1\}}{2}_\Pi$ to SGS.
(extending SGS by taking in $p_ip_j$ corresponds to expanding integer interval by adding $[p_ip_j]$.)
And we can take in $p_1p_2p_3$ to SGS because
$\sum\binom{\pset\setminus\{ p_1\}}{2}_\Pi>
\binom{l-1}{2}\cdot p_2p_3>p_1p_2p_3.$

(II) Assume $3\le d\le l-2$. If we can take
$p_1p_2\cdots p_d=\min( \binom{\pset_T}{d}_{\Pi})$ in the slowly growing sequence,
because there is a $\prec$-chain of divisors of $N'$
that connects the number of both ends:
\begin{gather}\notag
p_1p_2\cdots p_d\\ \notag
\prec p_1p_2\cdots p_{d-1}p_l\\ \notag
\prec p_1p_2\cdots p_{l-1}p_l\\ \notag
\vdots\\ \notag
\prec p_1p_{l-d+2}\cdots p_{l-1}p_l\\ \notag
\prec p_{l-d+1}p_{l-d+2}\cdots p_{l-1}p_l.
\end{gather}

We can take $p_{l-d+1}p_{l-d+2}\cdots p_l
=\max(\binom{\pset_T}{d}_{\Pi})$(Lemma\ref{sim} repeatedly.)

and all elements in $\binom{\pset_T}{d}_{\Pi}$ into the 
SGS.
And we can take
$p_1p_2\cdots p_{d+1}=\min( \binom{\pset_T}{d+1}_{\Pi})$
into SGS because
$\sum\binom{\pset_T}{d}_{\Pi}>\binom{l}{d}p_1\cdots p_d>p_1p_2\cdots p_{d+1}$.
We continue to apply this argument until the number of prime divisors
reaches $\lfl l/2\rfl$.

(III) In ``center part'' of divisor set,
Let
\begin{equation}
    \underline{\pset_T}\deq\{p_1,\dots,p_{\lfl l/2\rfl}\},
\overline{\pset_T}\deq\{p_{l-\lfl l/2\rfl+1},\dots,p_l\}. \notag
\end{equation}
 We can take in $\Pi(\undPT) $ by the argument in (II), we can gain
\begin{equation}
\frac{\Pi(\overPT)}{\Pi(\undPT)}
>(1+\frac{\lfl l/2\rfl}{2L_T})^{\lfl l/2 \rfl}
>2^{\lfl l/2\rfl\lfl l/2 \rfl/2L_T}
>2^{L_T/2(\log L_T)^4}
>  L_T \notag
\end{equation}
(The last $>$ follows from (\ref{powofx/l4}))and gain
\begin{equation}
\Pi(\undPT)\prec\dots\prec\Pi(\overPT)>q_W \Pi(\undPT)\ . \notag   
\end{equation}
So we can take in $q_W \Pi(\undPT)$.
Repeating similar argument again and again, 
we can take $\Pi(\pset_{<T})\Pi(\undPT)$ in a slowly growing sequence.

(IV) By the same argument as used in (II). We can take $N'/p_l$ in SGS.

(V) Because $\pset'$ is normal,
we can extend the SGS by adding  

$N'/p_{l-1},\dots,N'/p_1,N'/q_W,\dots N'/q_1$.
Now, the sum of SGS ($\deq\calf'$) is larger than $N'$
and $\cale<N'$, which means that 
\begin{equation}
N'\in\cale +I(\calf')=\cale+I(\calf)+\sum_{d ;appeaed\ in\ SGS}[d]   \subset\sum_{d|N',d\ne N'}[d]
\ (\calf'>N'). \notag
\end{equation}
and $N'$ is semiperfect.
And $N$ is.
\emptyline

To summarize, our slowly growing sequence made by $\calf$ and proper divisors of
$N'$ 
is as follows;

\begin{gather}\notag
\calf,
p_{l-1}p_l,\dots \binom{\pset_T\setminus\{p_1\}}{2}_\Pi\dots\\ \notag
\dots,p_1p_2p_3,\dots,p_{l-2}p_{l-1}p_l,\dots
\binom{\pset_T}{3}_\Pi\dots\\ \notag
\dots\binom{\pset_T}{4}_\Pi
\dots\dots\binom{\pset_T}{\lfl l/2\rfl-1}_\Pi\dots\\ \notag
\dots,\Pi(\undPT),\dots,\Pi(\overPT),q_1 \Pi(\undPT),\dots\\ \notag
\dots,\Pi(\pset_{<T})\Pi(\overPT),\dots\dots\Pi(\pset_{<T})\binom{\pset_T}{l-2}_\Pi\dots\\ \notag
\dots,N'/p_l,\dots, N'/p_1, N'/q_W,\dots ,N'/q_1.
\end{gather}

($\cdots X\cdots$ means lining up of elements of $X$ did not appear before 
by ascending order.)

\end{proof}

For a finite set $S\in \nat $.
Let $R(S)\deq \sum_{p\in S}1/p$.

\begin{proof}[Proof of Theorem $\ref{mainthm633}$]
By Theorem 6.10 from \cite{Dus2010},
\begin{equation}\label{invofsmallp}
\sum_{p:prime,p<L}\frac{1}{p}
<\log\log L\ + 0.26150 +\frac{1}{10(\log L)^2}+\frac{1}{15(\log L)^3}
<4.92252. \notag
\end{equation}

Assume $\sigma(n)/n>633$.
Then
\begin{equation}
    \frac{\sigma(n)}{n}\cdot
    \Pi_{p\in\pset(n)}(1+\frac{1}{p})^{-1}
    <\Pi_{p\in\pset(n)}(1+\frac{1}{p^2}+\frac{1}{p^4}+\cdots)
    <\frac{\pi^2}{6}. \notag
\end{equation}
So
\begin{equation}
\Pi_{p\in\pset(n)}(1+\frac{1}{p})
>\frac{\sigma(n)}{n}\cdot\frac{6}{\pi^2}
>633\cdot\frac{6}{\pi^2}>384.8178. \notag
\end{equation}
Let $\pset(n)$ be the set of the prime factors of $n$.
By omitting primes smaller than $L$,
``thin'' part, ``too large'' part, we can obtain a set of primes $\pset_{\mathrm{good}}$, which satisfies the conditions of Theorem \ref{logicallymainthm}.
We see that
\begin{equation}
\sum_{p\in\pset(n)}\frac{1}{p}
>\log(\Pi_{p\in\pset(n)}(1+\frac{1}{p}))
>\log(384.8178)
>5.95276. \notag
\end{equation}
Let $\pset(n)_{>L}\deq\pset(n)\cap(L,\infty)$,
\begin{equation}
\sum_{p\in \pset(n)_{>L}}\frac{1}{p}
\ge(\sum_{p\in\pset(n)}\frac{1}{p})\ - \sum_{p:prime,p<L}\frac{1}{p}
>5.95276-4.92252=1.03024, \notag
\end{equation}

Define $N_{\mathrm{omit}}\deq\{i\in\nat\ \big|\ |\pset(n)_{>L}\cap J_i|<(2L_i/(\log L_i)^2)+10\}$.
And define $\pset_{\mathrm{omit}}\deq\bigcup_{i\in N_{\mathrm{omit}}}( \pset(n)_{>L}\cap J_i )$.
The sum of the reciprocals of the elements of $\pset_{\mathrm{omit}}$ is bounded by
\begin{gather}\notag
\sum_{p\in\pset_{\mathrm{omit}}}\frac{1}{p}
<\sum_{k\in N_{\mathrm{omit}}}\frac{2L_k/(\log L_k)^2 + 10}{L_k}\\ \notag
<\sum_{k = 152}^\infty \frac{2}{(k\cdot \log 2 )^2}
+\sum_{k=1}^{\infty}\frac{10}{L_k}
<\frac{2}{(\log 2)^2}\cdot\frac{1}{151}+10^{-40}
<0.02757.
\end{gather}
(Second $<$ follows from $L>2^{152}$.)
Let $\pset_{\mathrm{good}}\deq \pset(n)_{>L}\setminus\pset_{\mathrm{omit}}$.
Its sum of reciprocals of its elements is bounded by
\begin{equation}
\sum_{p\in\pset_{\mathrm{good}}}\frac{1}{p}>1.03024-0.02757=1.00267>1+\frac{9}{2L}. \notag
\end{equation}
By definition , $\pset_{\mathrm{good}}$ satisfies the conditions required in 
Theorem \ref{logicallymainthm}. 
It means $\Pi(\pset_{\mathrm{good}})$ is semiperfect and $n$ is.
\end{proof}
We will estimate how big $n$ will be if condition
of \ref{mainthm633} holds.
Under the Riemann Hypothesis,
if $l(n)>633$ , 
Then by  Robin's theorem,
\begin{equation}
    633<l(n)<e^\gamma \log\log n \notag
\end{equation}
holds.
The smallest integer satisfying $\sigma(n)/n>633$
is larger than
\begin{equation}
   \exp(\exp(\frac{633}{e^\gamma})) \approx 10^{9.72\times10^{153}}. \notag
\end{equation}

%comlile overleaf
\bibliographystyle{amsplain}
\bibliography{weird}

%arXiv  post
%\input{output.bbl}

\end{document}